\documentclass[12pt,reqno]{article}

\usepackage[usenames]{color}
\usepackage{amssymb}
\usepackage{graphicx}
\usepackage{amscd}
\usepackage[colorlinks=true, linkcolor=webgreen, filecolor=webbrown, citecolor=webgreen]{hyperref}
\usepackage{color}
\usepackage{fullpage}
\usepackage{float}
\usepackage{graphics,amsmath,amssymb}
\usepackage{amsthm}
\usepackage{amsfonts}
\usepackage{latexsym}
\usepackage{epsf}
\usepackage{verbatim}
\usepackage{enumerate}
\usepackage{tikz}
\usepackage{caption}
\usepackage{capt-of}
\usepackage{forest}
\usepackage{skak}
\usepackage{mathtools}
\usepackage{array}
\usepackage[blocks]{authblk}
\usepackage{chessfss}
\usepackage{adjustbox}
\usepackage{xcolor}
\usepackage{mathabx}

\usetikzlibrary{matrix}

\definecolor{webgreen}{rgb}{0,.5,0}
\definecolor{webbrown}{rgb}{.6,0,0}
\definecolor{ourwhite}{gray}{0.72}
\definecolor{ourgrey}{gray}{0.43}
\definecolor{ourblack}{gray}{0}
\definecolor{anothergray}{gray}{0.45}

\theoremstyle{plain}
\newtheorem{theorem}{Theorem}

\theoremstyle{definition}

\theoremstyle{remark}

\newcommand{\rookB}[1][1.58ex]{
\adjustbox{Trim=3.2pt 2.2pt 3.2pt 0pt,width=#1,raise=0ex,margin=0.1ex 0ex 0.1ex 0ex}{\BlackRookOnWhite}
}
\newcommand{\knightB}[1][1.85ex]{
\adjustbox{Trim=2.3pt 2.35pt 2.5pt 0pt,width=#1,raise=-0.03ex,margin=0.14ex 0ex 0.14ex 0ex}{\BlackKnightOnWhite}
}
\newcommand{\bishopB}[1][1.79ex]{
\adjustbox{Trim=2.3pt 2pt 2.3pt 0pt,width=#1,raise=-0.12ex,margin=0.1ex 0ex 0.1ex 0ex}{\BlackBishopOnWhite}
}
\newcommand{\queenB}[1][2.05ex]{
\adjustbox{Trim=1.2pt 2.2pt 1.2pt 0pt,width=#1,raise=-0.08ex,margin=0.1ex 0ex 0.1ex 0ex}{\BlackQueenOnWhite}
}
\newcommand{\kingB}[1][1.95ex]{
\adjustbox{Trim=2pt 2pt 2pt 0pt,width=#1,raise=-0.06ex,margin=0.13ex 0ex 0.13ex 0ex}{\BlackKingOnWhite}
}

\newcommand{\bclaim}{\tikz\draw[fill=gray!200!white] (0,0)ellipse (5pt and 5pt);}

\newcommand{\wclaim}{\tikz\draw[fill=gray!0!white] (0,0)ellipse (5pt and 5pt);}

\makeatletter
\newcommand{\thickhline}{
    \noalign {\ifnum 0=`}\fi \hrule height 1pt
    \futurelet \reserved@a \@xhline
}
\newcolumntype{"}{@{\hskip\tabcolsep\vrule width 1pt\hskip\tabcolsep}}
\makeatother

\title{The Struggles of Chessland}
\author{Irene (Hwiseo) Choi}
\author{Shreyas Ekanathan}
\author{Aidan Gao}
\author{Sylvia Zia Lee}
\author{Rajarshi Mandal}
\author{Vaibhav Rastogi}
\author{Daniel Sheffield}
\author{Michael Yang}
\author{Angela Zhao}
\author{Corey Zhao}
\affil{PRIMES STEP}
\author{Tanya Khovanova}
\affil{MIT}

\date{\today}

\begin{document}

\maketitle

\begin{abstract}
This is a fairy tale taking place in Chessland, located in the Bermuda triangle. The chess pieces survey their land and trap enemy pieces. Behind the story, there is fascinating mathematics on how to optimize surveying and trapping. The tale is written by the students in the PRIMES STEP junior group, who were in grades 6 through 9. The paper has a conclusion, written by the group's mentor, Tanya Khovanova, explaining the students' results in terms of graph theory.
\end{abstract}

\section{Chessland}

The Bermuda triangle is a frightening and treacherous place. Over the years, a plethora of boats, planes, ships, and people have vanished without a trace in this lethal zone of the Atlantic Ocean. Despite these mysteries, inside the Bermuda triangle lies a magnificent kingdom of many islands, known as \textit{Chessland} by its inhabitants: chess pieces. Due to a magical spell placed on the kingdom three hundred years ago, Chessland is isolated from the rest of the human world. 

Each island is a square divided into equally-sized square counties. Thus, each island consists of a square number of counties. The islanders do not use compasses because their square islands are all positioned in the same way: their borders are straight lines running parallel cardinal directions. Surprisingly, all islands are of different sizes, and there is an island in each size. An island with $n^2$ counties is called Island $n$. The royal castle is located on Island $1$, where the King and Queen live together with their escort consisting of rooks, bishops, and knights. 

To describe the counties, citizens within each island use coordinates. The south-west county on Island $n$ is given the coordinates $(1,1)$, the south-east is given coordinates $(n,1)$, the north-west and north-east are given coordinates $(1,n)$, and $(n,n)$ respectively. Each county has a color, black or white, based on the eccentric weather patterns: black counties always have cloudy, dark weather, whereas white counties always have clear, light skies. If the sum of the coordinates of a county is odd, it's a white county, and if it is even, it's a black county. This forms a checkerboard pattern. Some of our island diagrams do not include the counties' colors when they are irrelevant. 

Long ago, there lived a paralyzed witch jealous that the others could amble around freely, so she cursed all the residents to move only in oddly specific ways. The witch thought the King had too much power, so she only allowed him to move to any county horizontally, vertically, or diagonally adjacent to its current county. Knights, being brave, were allowed to move in an L shape, jumping two counties in one cardinal direction and one county in a perpendicular direction. Rooks can move to any county that either shares the $x$-coordinate or the $y$-coordinate with the current coordinates of the county the rook is on. Bishops can move to any county located on a line with a slope of $-1$ or $1$ that goes through the county the bishop is currently on. The Queen can move to the union of the set of counties that a rook or bishop in its current coordinates could move to. Every move by someone takes a full day.

\section{Surveying}\label{sec:surveying}

After hearing rumors about enemy infiltration, the King and Queen decide to check every county of their islands to ensure their borders are secure. Being typical rich people, they are indolent, so they have their escort work for them. 

The chess pieces' method of checking is peculiar. Each chess piece, being in a county, can immediately survey all of the counties they can move to. To survey the island, a chess piece has to survey every county as fast as they can. Every day wasted is another opportunity for their enemies to gain foothold in Chessland. Luckily, the King is rich and has plenty of royal hot air balloons. These balloons can drop the chess piece at whichever island and county they want to start at.

\subsection{The Knight's shoe obsession}

The King and Queen need an escort member to visit Island $7$. The Knight, being brave, volunteers to survey the entire island.

The Knight, bowing deeply to the King, sets off to plan the route for the survey of Island $7$. Now, the Knight painstakingly plots out the island, and explores various paths. In the end, he finds the fastest route. ``Your majesty, after hours of time-consuming research, I have concluded that the fastest route to survey Island $7$ takes $11$ days. He shows his plan, pictured in Figure~\ref{fig:knightshoelace7x7}. The Knight can survey all the counties by jumping back and forth between two columns, starting at (3,2), then going to (5,3), (3,4), and so on. But after reaching (3,6), he needs to visit (5,6) and zig-zag back down to visit the remaining counties in the two columns. However, he cannot move from (3,6) to (5,6) in one day. So on the 6th day, he needs a transition county, shown in black, to get to (5,6) and continue zig-zagging down to (5,2).

\begin{figure}[ht!] 
\begin{center}
\begin{tikzpicture}
\draw[step=0.5cm,color=black, line width=1.5] (0,0) grid (3.5,3.5);
\node at (1.25, 2.75) {5};
\node at (2.25, 2.75) {7};
\node at (1.25, 2.25) {8};
\node at (2.25, 2.25) {4};
\node at (1.25, 1.75) {3};
\fill [black] (1.5,1.5) rectangle (2,2);
\node at (1.75, 1.75) {\textcolor{white}{6}};
\node at (2.25, 1.75) {9};
\node at (1.25, 1.25) {10};
\node at (2.25, 1.25) {2};
\node at (1.25, 0.75) {\knight};
\node at (2.25, 0.75) {11};
\end{tikzpicture}
\end{center}
\caption{The plan for Island $7$}
\label{fig:knightshoelace7x7}
\end{figure}
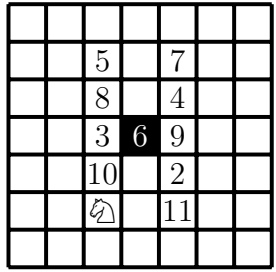

``Well, my Knight, how can you be sure that $11$ days is the best you can do?" the King queries. The Knight presents the diagram in Figure~\ref{fig:knight7x7proof}.
\begin{figure}[ht!] 
\begin{center}
\begin{tikzpicture}
\draw[step=0.5cm,color=black, line width=1.5] (0,0) grid (3.5,3.5);
\node at (0.25,3.25) {$\triangle$};
\node at (1.75,3.25) {\bclaim};
\node at (3.25,3.25) {$\triangle$};
\node at (1.25,2.75) {\wclaim};
\node at (2.25,2.75) {\wclaim};
\node at (2.75,2.75) {\begin{huge}$\blacktriangleright$\end{huge}};
\node at (0.75,2.25) {\wclaim};
\node at (1.75,2.25) {\bclaim};
\node at (2.75,2.25) {\wclaim};
\node at (0.25,1.75) {\bclaim};
\node at (1.25,1.75) {\bclaim};
\node at (2.25,1.75) {\bclaim};
\node at (3.25,1.75) {\bclaim};
\node at (0.75,1.25) {\wclaim};
\node at (1.75,1.25) {\bclaim};
\node at (2.75,1.25) {\wclaim};
\node at (0.75,0.75) {\begin{Huge}$\blacktriangleright$\end{Huge}};
\node at (1.25,0.75) {\wclaim};
\node at (2.25,0.75) {\wclaim};
\node at (0.25,0.25) {$\triangle$};
\node at (1.75,0.25) {\bclaim};
\node at (3.25,0.25) {$\triangle$};
\end{tikzpicture}
\end{center}
\caption{Proof for Island 7}
\label{fig:knight7x7proof}
\end{figure}
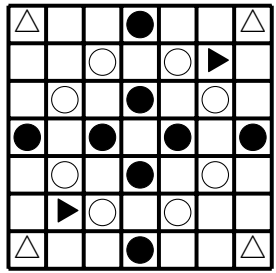

``To survey the counties with white triangles, we will need at least $4$ of the counties marked with white circles to be part of the Knight's route. To survey all of the counties with black triangles, we will need at least $2$ more counties marked with black circles to be part of the Knight's route. All of these counties are white. On a move, the Knight goes from a white county to a black county or vice versa. Therefore, there is at least one intermediate move between each of the $6$ white counties that are part of our route. This gives us a minimum of $6+5=11$.'' the Knight describes. 

``That is brilliant,'' exclaims the King, ``How did you think of that?'' 
 
``I love shoes,'' explains the Knight. 
 
``Huh?'' the King questions. 
 
The Knight responds, ``Connecting the consecutive counties in the plan with lines makes the map look like shoelaces,'' as shown in Figure~\ref{fig:shoelaceexample}.

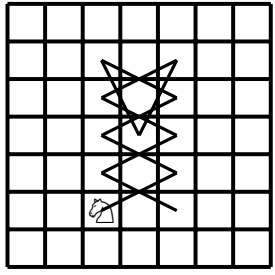
\begin{figure}[ht!] 
\begin{center}
\begin{tikzpicture}[scale=1]
\node at (1.25, 0.75){\knight};
\draw [very thick] (1.25,0.75) -- (2.25,1.25);
\draw [very thick] (2.25,1.25) -- (1.25,1.75);
\draw [very thick] (2.25,2.25) -- (1.25,1.75);
\draw [very thick] (2.25,2.25) -- (1.25,2.75);
\draw [very thick] (1.75,1.75) -- (1.25,2.75);
\draw [very thick] (1.75,1.75) -- (2.25,2.75);
\draw [very thick] (1.25,2.25) -- (2.25,2.75);
\draw [very thick] (1.25,2.25) -- (2.25,1.75);
\draw [very thick] (2.25,1.75) -- (1.25,1.25);
\draw [very thick] (2.25,0.75) -- (1.25,1.25);
\draw[step=0.5cm,color=black, line width=1.5] (0,0) grid (3.5,3.5);
\end{tikzpicture}
\end{center}
\caption{The shoelace pattern for Island $7$}
\label{fig:shoelaceexample}
\end{figure}

The Knight realizes that it is unnecessary to give the pattern a name for a trip to only one island. ``Is the shoelace pattern helpful on other islands?'' asks the Queen, as if she could read the Knight's mind.

The next day, the Knight devises a plan. He reports to the King, ``I can survey all islands whose dimensions are divisible by $7$ with a shoelace pattern.'' He shows an example for Islands $14$ that requires 52 days and is depicted in Figure~\ref{fig:knightsightseeing14}.

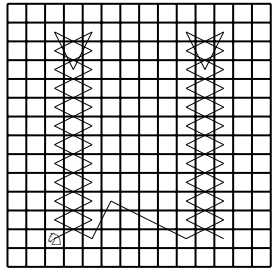
\begin{figure}[ht!]
\begin{center}
\begin{tikzpicture}[scale=0.5]
            \draw[step=0.5cm,color=black, line width=0.75] (0,0) grid (7,7);
            \node at (1.25, 0.75) {\tiny\knight};
            \draw [ultra thin] (1.25,0.75) -- (2.25,1.25);
            \draw [ultra thin] (1.25,1.75) -- (2.25,1.25);
            \draw [ultra thin] (1.25,1.75) -- (2.25,2.25);
            \draw [ultra thin] (1.25,2.75) -- (2.25,2.25);
            \draw [ultra thin] (2.25,3.25) -- (1.25,2.75);
            \draw [ultra thin] (2.25,4.25) -- (1.25,3.75);
            \draw [ultra thin] (2.25,5.25) -- (1.25,4.75);
            \draw [ultra thin] (2.25,6.25) -- (1.25,5.75);
            \draw [ultra thin] (1.25,3.75) -- (2.25,3.25);
            \draw [ultra thin] (1.25,4.75) -- (2.25,4.25);
            \draw [ultra thin] (1.25,5.75) -- (2.25,5.25);
            \draw [ultra thin] (1.75,5.25) -- (2.25,6.25);
            \draw [ultra thin] (1.75,5.25) -- (1.25,6.25);
            \draw [ultra thin] (2.25,5.75) -- (1.25,6.25);
            \draw [ultra thin] (2.25,4.75) -- (1.25,5.25);
            \draw [ultra thin] (2.25,3.75) -- (1.25,4.25);
            \draw [ultra thin] (2.25,2.75) -- (1.25,3.25);
            \draw [ultra thin] (2.25,1.75) -- (1.25,2.25);
            \draw [ultra thin] (2.25,0.75) -- (1.25,1.25);
            \draw [ultra thin] (2.25,5.75) -- (1.25,5.25);
            \draw [ultra thin] (2.25,4.75) -- (1.25,4.25);
            \draw [ultra thin] (2.25,3.75) -- (1.25,3.25);
            \draw [ultra thin] (2.25,2.75) -- (1.25,2.25);
            \draw [ultra thin] (2.25,1.75) -- (1.25,1.25);
            \draw [ultra thin] (2.25,0.75) -- (2.75,1.75);
            \draw [ultra thin] (3.75,1.25) -- (2.75,1.75);
            \draw [ultra thin] (3.75,1.25) -- (4.75,0.75);
            \draw [ultra thin] (4.75,0.75) -- (5.75,1.25);
            \draw [ultra thin] (4.75,1.75) -- (5.75,1.25);
            \draw [ultra thin] (4.75,1.75) -- (5.75,2.25);
            \draw [ultra thin] (4.75,2.75) -- (5.75,2.25);
            \draw [ultra thin] (4.75,1.25) -- (5.75,1.75);
            \draw [ultra thin] (5.75,3.25) -- (4.75,2.75);
            \draw [ultra thin] (5.75,4.25) -- (4.75,3.75);
            \draw [ultra thin] (5.75,5.25) -- (4.75,4.75);
            \draw [ultra thin] (5.75,6.25) -- (4.75,5.75);
            \draw [ultra thin] (4.75,3.75) -- (5.75,3.25);
            \draw [ultra thin] (4.75,4.75) -- (5.75,4.25);
            \draw [ultra thin] (4.75,5.75) -- (5.75,5.25);
            \draw [ultra thin] (5.25,5.25) -- (5.75,6.25);
            \draw [ultra thin] (5.25,5.25) -- (4.75,6.25);
            \draw [ultra thin] (5.75,5.75) -- (4.75,6.25);
            \draw [ultra thin] (5.75,4.75) -- (4.75,5.25);
            \draw [ultra thin] (5.75,3.75) -- (4.75,4.25);
            \draw [ultra thin] (5.75,2.75) -- (4.75,3.25);
            \draw [ultra thin] (5.75,1.75) -- (4.75,2.25);
            \draw [ultra thin] (5.75,0.75) -- (4.75,1.25);
            \draw [ultra thin] (5.75,5.75) -- (4.75,5.25);
            \draw [ultra thin] (5.75,4.75) -- (4.75,4.25);
            \draw [ultra thin] (5.75,3.75) -- (4.75,3.25);
            \draw [ultra thin] (5.75,2.75) -- (4.75,2.25);
\end{tikzpicture}
   \caption{The shoelace pattern for Island $14$}
   \label{fig:knightsightseeing14}
\end{center}
\end{figure}

In one shoelace pattern, the Knight can survey a $7$ by $n$ rectangle of counties at a time. If the rectangle's south-west corner is $(1,1)$, then the Knight will start at $(3,2)$; move to $(5,3)$; $(3,4$); $(5,5)$; and so on, zig-zagging between 2 columns. When the Knight is in row $n-1$, he can move down to $(4, n-3)$, then move back to row $n-1$ and the opposite column. Using this pattern, he can zig-zag all the way down to $(5, 2)$. 

The King asks, ``How do you know that you will come back to $(5,2)$?'' 

The Knight explains, when traveling downwards, he will land in a county on the opposite column across from a county he has moved to on the way up. For example, if he landed on county $(3, x)$ on the way up, he would land on county $(5, x)$ on the way down. Therefore, he will end in the county that is 2 units across from the county he started from, which is (3,2). So, he will always land on (5,2).

The Knight explains that his idea works on islands of all side lengths divisible by 7. He can start at county (3,2), make a shoelace pattern of appropriate height, and finish at county (5,2). Then he can move to counties (6,4) and (8,3). On the next move, he can start a new shoelace in the next 7 by $n$ rectangle at county (10,2).

The King asks, ``How many days does it take for Island $7k$?''

The Knight clarifies that every 7 by $h$ rectangle has a shoelace pattern. He even invents his own theorem.

\begin{theorem}[The shoelace theorem]
One shoelace pattern in a 7 by $h$ column takes $2h-3$ days.
\end{theorem}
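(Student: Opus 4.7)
The plan is to count the number of counties the knight visits during one shoelace pattern, and then observe that this count equals the number of days, since the knight occupies one county on his starting day and moves to exactly one new county on each subsequent day.

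I would organize the count by breaking the pattern into three phases. The \emph{ascending phase} begins at $(3,2)$ and follows $(3,2), (5,3), (3,4), (5,5), \ldots$, alternating columns $3$ and $5$ while incrementing the row by $1$ each step; each step has shape $(\pm 2, +1)$ and so is a legal knight move. The phase terminates at row $h-1$, in column $3$ if $h$ is odd and in column $5$ if $h$ is even, and visits exactly $h-2$ counties. The \emph{transition} consists of two legal knight moves of shape $(\mp 1, -2)$ and $(\pm 1, +2)$, passing through the single column-$4$ county $(4, h-3)$ and emerging at row $h-1$ in the opposite column from where the ascent ended. The \emph{descending phase} then mirrors the ascending phase, alternating columns while decrementing the row by $1$ using moves of shape $(\pm 2, -1)$, and visiting $h-2$ more counties, terminating at $(5, 2)$ exactly as promised in the preceding discussion.

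Next I would verify that all counties visited are distinct. The ascending and descending phases together cover each county in columns $3$ and $5$ from row $2$ through row $h-1$ exactly once: one phase covers the even-row entries in column $3$ and the odd-row entries in column $5$, while the other phase covers the complementary set. This gives $2(h-2)$ distinct counties. The transition county $(4, h-3)$ lies in column $4$, which is disjoint from columns $3$ and $5$, so it contributes one further county. Hence the shoelace visits $2(h-2) + 1 = 2h-3$ distinct counties, and therefore takes $2h-3$ days.

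The main (minor) obstacle is the parity asymmetry: which of columns $3$ or $5$ the ascent ends in depends on whether $h-1$ is even or odd. This is handled uniformly by the observation that $(4, h-3)$ is a knight's move from both $(3, h-1)$ and $(5, h-1)$, so the same transition construction works in either parity case. One should also briefly note a mild side condition, namely $h \geq 4$, so that the transition county $(4, h-3)$ actually lies inside the rectangle; for $h = 7$ the construction recovers the $11$-day tour of Figure~\ref{fig:knightshoelace7x7}, matching $2h-3 = 11$.
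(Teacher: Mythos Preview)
Your proposal is correct and follows essentially the same approach as the paper: count the counties visited and equate that with the number of days. The paper's proof is a three-sentence version of your argument---it simply notes that each of the two columns contributes $h-2$ counties and the transition contributes one more, giving $2(h-2)+1=2h-3$---while you decompose by phase (ascend/transition/descend) rather than by column, and add the distinctness check, the parity discussion, and the side condition $h\ge 4$, none of which the paper makes explicit.
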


\begin{proof}
In each of the two columns in the 7 by $h$ rectangle used by the Knight, he visits $h-2$ counties, which require $2(h-2)=2h-4$ days. One extra county is required to move between the columns, so it takes $2h-3$ days per 7 by $h$ section. 
\end{proof}

Going back to Island $7k$, by the shoelace theorem, each shoelace pattern takes $14k-3$ days per 7 by $7k$ section. In addition, the Knight needs two extra days to transition between neighboring 7 by $7k$ sections. Therefore, it takes 
\[(14k-3)k+2(k-1) = 14k^2-k-2\]
days to survey Island $7k$.

The King then asks, ``But what about other islands of sizes not divisible by 7? Can we do this on Island 13?'' The Knight shows the plan with 2 shoelaces that takes 47 days as in Figure~\ref{fig:knightsightseeing13}.

\begin{figure}[ht!]
    \begin{center}
        \begin{tikzpicture}[scale=0.8]
          \draw[step=0.5cm,color=black, line width=1.5] (-0.5,0.5) grid (6,7);
         \node at (0.75, 1.25) {\knight};
          \draw [very thick] (0.75,1.25) -- (1.75,1.75);
          \draw [very thick] (1.75,1.75) -- (0.75,2.25);
          \draw [very thick] (0.75,2.25) -- (1.75,2.75);
          \draw [very thick] (1.75,2.75) -- (0.75,3.25);
          \draw [very thick] (0.75,3.25) -- (1.75,3.75);
          \draw [very thick] (1.75,3.75) -- (0.75,4.25);
          \draw [very thick] (0.75,4.25) -- (1.75,4.75);
          \draw [very thick] (1.75,4.75) -- (0.75, 5.25);
          \draw [very thick] (0.75,5.25) -- (1.75,5.75);
          \draw [very thick] (1.75,5.75) -- (0.75,6.25);
          \draw [very thick] (0.75,6.25) -- (1.25,5.25);
          \draw [very thick] (1.25,5.25) -- (1.75,6.25); 
          \draw [very thick] (1.75,6.25) -- (0.75,5.75);
          \draw [very thick] (0.75,5.75) -- (1.75,5.25);
          \draw [very thick] (1.75,5.25) -- (0.75,4.75);
          \draw [very thick] (0.75,4.75) -- (1.75,4.25);
          \draw [very thick] (1.75,4.25) -- (0.75,3.75);
          \draw [very thick] (0.75,3.75) -- (1.75,3.25);
          \draw [very thick] (1.75,3.25) -- (0.75,2.75);
          \draw [very thick] (0.75,2.75) -- (1.75,2.25);
          \draw [very thick] (1.75,2.25) -- (0.75,1.75);
          \draw [very thick] (0.75,1.75) -- (1.75,1.25);
          \draw [very thick] (1.75,1.25) -- (2.75,1.75);
          \draw [very thick] (2.75,1.75) -- (3.75,1.25);
          \draw [very thick] (3.75,1.25) -- (4.75,1.75);
          \draw [very thick] (4.75,1.75) -- (3.75,2.25);
          \draw [very thick] (3.75,2.25) -- (4.75,2.75);
          \draw [very thick] (4.75,2.75) -- (3.75,3.25);
          \draw [very thick] (3.75,3.25) -- (4.75,3.75);
          \draw [very thick] (4.75,3.75) -- (3.75,4.25);
          \draw [very thick] (3.75,4.25) -- (4.75,4.75);
          \draw [very thick] (4.75,4.75) -- (3.75,5.25);
          \draw [very thick] (3.75,5.25) -- (4.75,5.75);
          \draw [very thick] (4.75,5.75) -- (3.75,6.25);
          \draw [very thick] (3.75,6.25) -- (4.25,5.25);
          \draw [very thick] (4.25,5.25) -- (4.75,6.25);
          \draw [very thick] (4.75,6.25) -- (3.75,5.75);
          \draw [very thick] (3.75,5.75) -- (4.75,5.25);
          \draw [very thick] (4.75,5.25) -- (3.75,4.75);
          \draw [very thick] (3.75,4.75) -- (4.75,4.25);
          \draw [very thick] (4.75,4.25) -- (3.75,3.75);
          \draw [very thick] (3.75,3.75) -- (4.75,3.25);
          \draw [very thick] (4.75,3.25) -- (3.75,2.75);
          \draw [very thick] (3.75,2.75) -- (4.75,2.25);
          \draw [very thick] (4.75,2.25) -- (3.75,1.75);
          \draw [very thick] (3.75,1.75) -- (4.75,1.25);
        \end{tikzpicture}
    \end{center}
    \caption{The shoelace pattern for Island $13$}
    \label{fig:knightsightseeing13}
\end{figure}
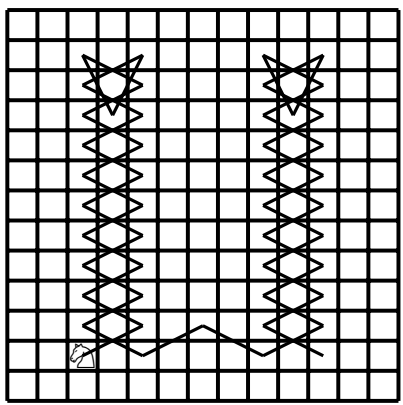

The Knight describes patiently, ``For Island 13, we just move the shoelaces closer to each other. As a bonus, we save one day in transition. For Island $7k-1$, we can use a similar idea, moving the last shoelace one column to the left, to find that we need $2(k-1)- 1 = 2k-3$ days for transitioning between shoelaces, for $k > 1$. Due to the shoelace theorem, we know that one shoelace in a 7 by $7k-1$ rectangle takes $14k-5$ days. Thus, the total number of days is
\[k(14k - 5) + 2k - 3 = 14k^2-3k-3.\]

``Gotcha!'' says the King. ``I am sure this formula doesn't work for $k=1$. By your formula, you have $-1$ transition days in this case.

The Knight is quick to reply, ``Our shoelace is closer to the border, and we can save one day by skipping the last trip, as you can see in Figure~\ref{fig:knightsightseeing6}, where the skipped county is in gray. So, my formula works all the time!''

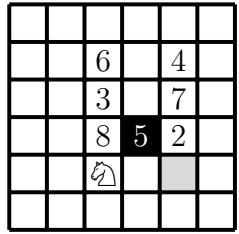
\begin{figure}[ht!]
    \begin{center}   
        \begin{tikzpicture}[scale=1]
            \fill [black] (1.5,1) rectangle (2,1.5);
            \fill [lightgray!60!white] (2,0.5) rectangle (2.5,1);
            \node at (1.75,1.25) {\textcolor{white}{5}};
            \node at (1.25,0.75) {\knight};
			\node at (1.25,1.25) {8};
			\node at (2.25,1.25) {2};
			\node at (1.25,1.75) {3};
			\node at (2.25,1.75) {7};
			\node at (1.25,2.25) {6};
			\node at (2.25,2.25) {4};
            \draw[step=0.5cm,color=black, line width=1.5] (0,0) grid (3,3);
        \end{tikzpicture}
    \end{center}
    \caption{The unfinished shoelace pattern for Island $6$}
    \label{fig:knightsightseeing6}
\end{figure}

``Not so fast!'' says the King. So far, you only know what to do on islands of sizes $7k$ and $7k-1$. What about the rest?

The Knight sums up by explaining that the way he jumps is very complicated; he also has a very bounded vision, so with the shoelace pattern, he covers about 3.5 new counties each day, slightly more. Thus, as a big picture, on islands of size $h$, he needs not more than $\frac{h^2}{3.5}$ days.

Following on, the King asks, ``Still, I want a formula for the number of days that guarantees that you can survey Island $n$.''

The Knight then explains, ``For islands of sizes $7k-r$, where $1 \leq r \leq 6$, we have to fit $k$ shoelaces. This can be done by positioning shoelaces closer to each other. If we remove a column between two adjacent shoelaces, we can reduce the usual two transition counties to just one. By taking $r$ pairs of consecutive shoelaces, removing a column between each pair, and reducing the transition between them from 2 to 1, we can cut the transition time to $2(k-1) - r = 2k-r-2$. Thus, the formula for the total number of days becomes:
\[(2(7k-r) - 3)k + 2k-r-2 = (14k - 2r - 3)k + 2k-r-2  = 14k^2 - (2r+1)k - r - 2.\]

``Are you sure?'' asks the Queen. ``Of course,'' replies the Knight. ``If you plug in $r$ equals 0 or 1, you get the same formula as before.''

``You are being careless again,'' says the Queen. ``Your formula works for large sizes, but what if $k < r$? Then you do not have $r$ pairs of consecutive shoelaces. Suppose we have Island 9, that is $k = 2$, and $r = 5$. Then your formula gives $-3$ transition days! In addition, you have two shoelaces, but you must remove 5 columns.''

``My apologies,'' says the Knight. After some thought, he argues, ``We can always move shoelaces closer to each other or closer to the border. If there are one or three columns between the pair of consecutive shoelaces, I can transition in one day. If there are two or four, I need two days. One transition day is impossible because the counties separated by an even number of columns are of different colors, meaning they need an even number of transition days.''

He continued, ``When $r > k$, we can always remove $r$ columns next to the border or between shoelaces in such a way that we have an odd number of columns between two consecutive shoelaces. Thus, we can always save $\min\{r,k-1\}$ days. Therefore, the formula for the total number of days becomes:
\[(2(7k-r) - 3)k + 2k-\min\{r,k-1\}-2  = 14k^2 - (2r+1)k - \min\{r,k-1\} - 2.\]

``So, how many days do you need for Island 9?'' asks the Queen. 

``I can do it with two shoelaces and one transition day. It will take me $2(2\cdot 9 - 3) + 1 = 31$ days. You can see this plan in Figure~\ref{fig:knightshoelacesislands9-11}. The same figure also shows my plan for Island 11, which takes $2(2\cdot 11 - 3) + 1 = 39$ days.

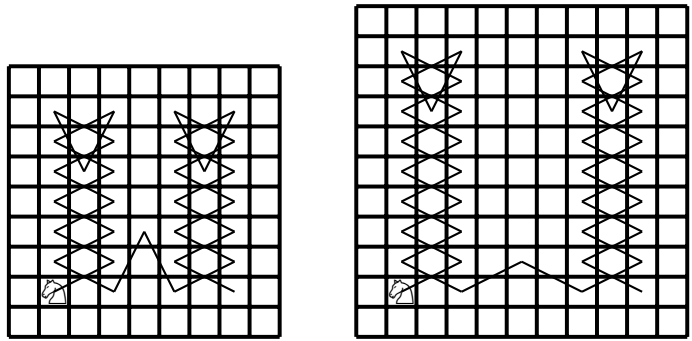
\begin{figure}[ht!]
    \begin{center}
        \begin{tikzpicture}[scale=0.8]
	\node at (0.75, 0.75) {\knight};
            \draw[step=0.5cm,color=black, line width=1.5] (0,0) grid (4.5,4.5);
        \draw [thick] (0.75,0.75) -- (1.75,1.25);
        \draw [thick] (1.75,1.25) -- (0.75,1.75);
        \draw [thick] (0.75,1.75) -- (1.75,2.25);
        \draw [thick] (1.75,2.25) -- (0.75,2.75);
        \draw [thick] (0.75,2.75) -- (1.75,3.25);
        \draw [thick] (1.75,3.25) -- (0.75,3.75);
        \draw [thick] (0.75,3.75) -- (1.25,2.75);
        \draw [thick] (1.25,2.75) -- (1.75,3.75);
        \draw [thick] (1.75,3.75) -- (0.75,3.25);
        \draw [thick] (0.75,3.25) -- (1.75,2.75);
        \draw [thick] (1.75,2.75) -- (0.75,2.25);
        \draw [thick] (0.75,2.25) -- (1.75,1.75);
        \draw [thick] (1.75,1.75) -- (0.75,1.25);
        \draw [thick] (0.75,1.25) -- (1.75,0.75);
        \draw [thick] (1.75,0.75) -- (2.25,1.75);
        \draw [thick] (2.25,1.75) -- (2.75,0.75);
        \draw [thick] (2.75,0.75) -- (3.75,1.25);
        \draw [thick] (3.75,1.25) -- (2.75,1.75);
        \draw [thick] (2.75,1.75) -- (3.75,2.25);
        \draw [thick] (3.75,2.25) -- (2.75,2.75);
        \draw [thick] (2.75,2.75) -- (3.75,3.25);
        \draw [thick] (3.75,3.25) -- (2.75,3.75);
        \draw [thick] (2.75,3.75) -- (3.25,2.75);
        \draw [thick] (3.25,2.75) -- (3.75,3.75);
        \draw [thick] (3.75,3.75) -- (2.75,3.25);
        \draw [thick] (2.75,3.25) -- (3.75,2.75);
        \draw [thick] (3.75,2.75) -- (2.75,2.25);
        \draw [thick] (2.75,2.25) -- (3.75,1.75);
        \draw [thick] (3.75,1.75) -- (2.75,1.25);
        \draw [thick] (2.75,1.25) -- (3.75,0.75);
        \end{tikzpicture}
        \quad\quad
        \begin{tikzpicture}[scale=0.8]
\node at (0.75, 0.75) {\knight};
\draw[step=0.5cm,color=black, line width=1.5] (0,0) grid (5.5,5.5);
\draw [thick] (0.75,0.75) -- (1.75,1.25);
\draw [thick] (1.75,1.25) -- (0.75,1.75);
\draw [thick] (0.75,1.75) -- (1.75,2.25);
\draw [thick] (1.75,2.25) -- (0.75,2.75);
\draw [thick] (0.75,2.75) -- (1.75,3.25);
\draw [thick] (1.75,3.25) -- (0.75,3.75);
\draw [thick] (0.75,3.75) -- (1.75,4.25);
\draw [thick] (1.75,4.25) -- (0.75,4.75);
\draw [thick] (0.75,4.75) -- (1.25,3.75);
\draw [thick] (1.25,3.75) -- (1.75,4.75);
\draw [thick] (1.75,4.75) -- (0.75,4.25);
\draw [thick] (0.75,4.25) -- (1.75,3.75);
\draw [thick] (1.75,3.75) -- (0.75,3.25);
\draw [thick] (0.75,3.25) -- (1.75,2.75);
\draw [thick] (1.75,2.75) -- (0.75,2.25);
\draw [thick] (0.75,2.25) -- (1.75,1.75);
\draw [thick] (1.75,1.75) -- (0.75,1.25);
\draw [thick] (0.75,1.25) -- (1.75,0.75);
\draw [thick] (1.75,0.75) -- (2.75,1.25);
\draw [thick] (2.75,1.25) -- (3.75,0.75);
\draw [thick] (3.75,0.75) -- (4.75,1.25);
\draw [thick] (4.75,1.25) -- (3.75,1.75);
\draw [thick] (3.75,1.75) -- (4.75,2.25);
\draw [thick] (4.75,2.25) -- (3.75,2.75);
\draw [thick] (3.75,2.75) -- (4.75,3.25);
\draw [thick] (4.75,3.25) -- (3.75,3.75);
\draw [thick] (3.75,3.75) -- (4.75,4.25);
\draw [thick] (4.75,4.25) -- (3.75,4.75);
\draw [thick] (3.75,4.75) -- (4.25,3.75);
\draw [thick] (4.25,3.75) -- (4.75,4.75);
\draw [thick] (4.75,4.75) -- (3.75,4.25);
\draw [thick] (3.75,4.25) -- (4.75,3.75);
\draw [thick] (4.75,3.75) -- (3.75,3.25);
\draw [thick] (3.75,3.25) -- (4.75,2.75);
\draw [thick] (4.75,2.75) -- (3.75,2.25);
\draw [thick] (3.75,2.25) -- (4.75,1.75);
\draw [thick] (4.75,1.75) -- (3.75,1.25);
\draw [thick] (3.75,1.25) -- (4.75,0.75);
        \end{tikzpicture}
    \end{center}
    \caption{Shoelaces on Islands 9 and 11}
    \label{fig:knightshoelacesislands9-11}
\end{figure}

``Wait! I found an even better way!'' exclaims the Queen. ``In an example in Figure~\ref{fig:knightsightseeing6} you showed that it is possible to literally cut corners. You can survey these two islands in 24 and 36 days, respectively. My plan is in  Figure~\ref{fig:knightsightseeing9-11}.''

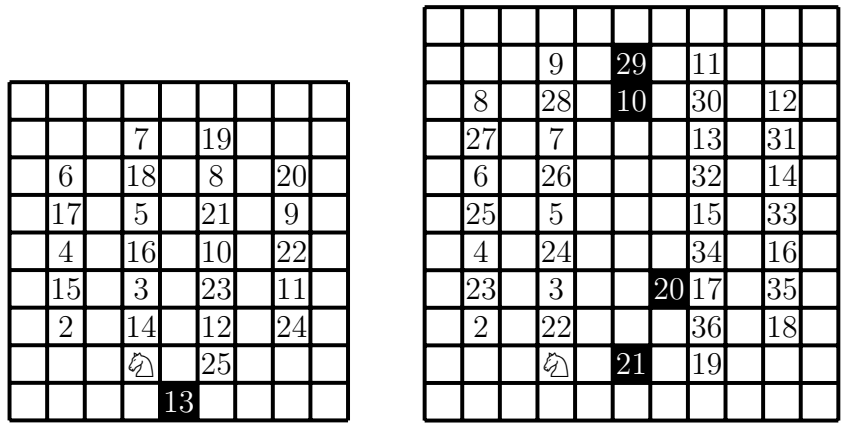
\begin{figure}[ht!]
\begin{center}
\begin{tikzpicture}
	\draw[step=0.5cm,color=black, line width=1.5] (-2.5,-2.5) grid (2,2);
	\node at (-1.75, +0.75){6};
	\node at (-1.75, +0.25){17};
	\node at (-1.75, -0.25){4};
	\node at (-1.75, -0.75){15};
	\node at (-1.75, -1.25){2};
	\node at (-0.75, +1.25){7};
	\node at (-0.75, +0.75){18};
	\node at (-0.75, +0.25){5};
	\node at (-0.75, -0.25){16};
	\node at (-0.75, -0.75){3};
	\node at (-0.75, -1.25){14};
	\node at (-0.75, -1.75){\knight};
	\fill [black] (-0.5,-2.5) rectangle (0,-2);
	\node at (-0.25, -2.25){\textcolor{white}{13}};
	\node at (0.25, +1.25){19};
	\node at (0.25, +0.75){8};
	\node at (0.25, +0.25){21};
	\node at (0.25, -0.25){10};
	\node at (0.25, -0.75){23};
	\node at (0.25, -1.25){12};
	\node at (0.25, -1.75){25};
	\node at (1.25, +0.75){20};
	\node at (1.25, +0.25){9};
	\node at (1.25, -0.25){22};
	\node at (1.25, -0.75){11};
	\node at (1.25, -1.25){24};
\end{tikzpicture}
\quad\quad
\begin{tikzpicture}[scale=1]
            \fill [black] (2.5,0.5) rectangle (3,1);
            \fill [black] (2.5,4) rectangle (3,4.5);
            \fill [black] (2.5,4.5) rectangle (3,5);
            \fill [black] (3,1.5) rectangle (3.5,2);\node at (0.75, 1.25) {2};
            \node at (0.75, 1.75) {23};
            \node at (0.75, 2.25) {4};
            \node at (0.75, 2.75) {25};
            \node at (0.75, 3.25) {6};
            \node at (0.75, 3.75) {27};
            \node at (0.75, 4.25) {8};
            \node at (1.75, 0.75) {\knight};
            \node at (1.75, 1.25) {22};
            \node at (1.75, 1.75) {3};
            \node at (1.75, 2.25) {24};
            \node at (1.75, 2.75) {5};
            \node at (1.75, 3.25) {26};
            \node at (1.75, 3.75) {7};
            \node at (1.75, 4.25) {28};
            \node at (1.75, 4.75) {9};
            \node at (2.75, 0.75) {\textcolor{white}{21}};
            \node at (2.75, 4.25) {\textcolor{white}{10}};
            \node at (2.75, 4.75) {\textcolor{white}{29}};
            \node at (3.25, 1.75) {\textcolor{white}{20}};
            \node at (3.75, 0.75) {19};
            \node at (3.75, 1.25) {36};
            \node at (3.75, 1.75) {17};
            \node at (3.75, 2.25) {34};
            \node at (3.75, 2.75) {15};
            \node at (3.75, 3.25) {32};
            \node at (3.75, 3.75) {13};
            \node at (3.75, 4.25) {30};
            \node at (3.75, 4.75) {11};
            \node at (4.75, 1.25) {18};
            \node at (4.75, 1.75) {35};
            \node at (4.75, 2.25) {16};
            \node at (4.75, 2.75) {33};
            \node at (4.75, 3.25) {14};
            \node at (4.75, 3.75) {31};
            \node at (4.75, 4.25) {12};
            \draw[step=0.5cm,color=black, line width=1.5] (0,0) grid (5.5,5.5);     
\end{tikzpicture}
\end{center}
   \caption{Better plans for Islands 9 and 11}
   \label{fig:knightsightseeing9-11}
\end{figure}

``Why does the Knight like very large islands so much?'' asks the confused King.

The Queen responds, ``Maybe he doesn't want to allude to the fact that he can never fully survey Islands $2$ and $3$ due to his style of moving.'' 

The Knight, overhearing this, explains, ``I showed to you how I can survey any island of size 6 and larger by my lovely plans reminding me of shoes. I can still survey Island 4 and Island 5 in $7$ and $8$ days, respectively, as you can see in Figure~\ref{fig:knight4-5}. I never discussed this before because my plans do not remind me of shoes!''

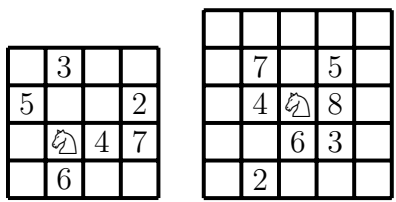
\begin{figure}[ht!]
\begin{center}
\begin{tikzpicture}
\draw[step=0.5cm,color=black, line width=1.5] (0,0) grid (2,2);
\node at (0.75,1.75) {3};
\node at (0.25,1.25) {5};
\node at (1.75,1.25) {2};
\node at (0.75,0.75) {\knight};
\node at (1.25,0.75) {4};
\node at (1.75,0.75) {7};
\node at (0.75,0.25) {6};
\end{tikzpicture}
\quad
\begin{tikzpicture}
\draw[step=0.5cm,color=black, line width=1.5] (-1,-1) grid (1.5,1.5);
\node at (-0.25,+0.75) {7};
\node at (+0.75,+0.75) {5};
\node at (-0.25,+0.25) {4};
\node at (+0.25,+0.25) {\knight};
\node at (+0.75,+0.25) {8};
\node at (+0.25, -0.25) {6};
\node at (+0.75, -0.25) {3};
\node at (-0.25,-0.75) {2};
\end{tikzpicture}
\end{center}
\caption{Knight surveying Island 4 and 5}
\label{fig:knight4-5}
\end{figure}

Table~\ref{tab:knightshoelace} summarizes the Knight's calculations for Islands 1 to 15. The first row is the Island size, followed by the shoelace formula result in the second row. The third row shows the fastest times found by the Knight while experimenting to find a better way than the formula. Note that for Islands 4 and 5, there is not enough room to place a proper shoelace pattern on the island, so the best plan is less optimal than the shoelace formula.

\begin{table}
\begin{center}
    \begin{tabular}{|r|c|c|c|c|c|c|c|c|c|c|c|c|c|c|c|}
\hline
    \textbf{IS}& 1  & 2 &3  &4  &5  &6&7 &8 &9 &10&11&12&13&14&15 \\
    \textbf{SF}&$-1$&1  &3  &5  &7  &9&11&27&31&35&39&43&47&52&83\\
    \textbf{FT}& 1  &N/A&N/A&7  &8  &8&11&20&25&33&36&43&47&52&70\\
\hline
     \end{tabular}
\end{center}
\caption{Numbers from shoelace formula and the fastest times found}
\label{tab:knightshoelace}
\end{table}

The Queen gets excited, ``I noticed that for islands of sizes $7k+1$ you found a much better way than the shoelace formula.'' ``Indeed,'' replies the Knight, ``Instead of squeezing $k+1$ shoelaces, I use $k$ shoelaces with an extra secret pattern. For example, you can see my plan for Island 15 in Figure~\ref{fig:knightsightseeing15}.''

\begin{figure}[ht!]
    \begin{center}
\begin{tikzpicture}
            \fill [black] (1,6) rectangle (1.5,6.5);
            \fill [black] (2,2) rectangle (2.5,2.5);
            \fill [black] (3,1.5) rectangle (3.5,2);
            \fill [black] (4.5,6) rectangle (5,6.5);
            \node at (0.75, 1.25) {\knight};
            \node at (1.75, 1.75) {2};
            \node at (0.75, 2.25) {3};
            \node at (1.75, 2.75) {4};
            \node at (0.75, 3.25) {5};
            \node at (1.75, 3.75) {6};
            \node at (0.75, 4.25) {7};
            \node at (1.75, 4.75) {8};
            \node at (0.75, 5.25) {9};
            \node at (1.75, 5.75) {10};
            \node at (0.75, 6.25) {11};
            \node at (1.75, 6.75) {12};
            \node at (0.75, 7.25) {13};
            \node at (1.25,6.25) {\textcolor{white}{14}};
            \node at (1.75, 7.25) {15};
            \node at (0.75, 6.75) {16};
            \node at (1.75, 6.25) {17};
            \node at (0.75, 5.75) {18};
            \node at (1.75, 5.25) {19};
            \node at (0.75, 4.75) {20};
            \node at (1.75, 4.25) {21};
            \node at (0.75, 3.75) {22};
            \node at (1.75, 3.25) {23};
            \node at (0.75, 2.75) {24};
            \node at (1.75, 2.25) {25};
            \node at (0.75, 1.75) {26};
            \node at (1.75, 1.25) {27};
            \node at (2.25,2.25) {\textcolor{white}{28}};
            \node at (3.25,1.75) {\textcolor{white}{29}};
            \node at (4.25, 1.25) {30};
            \node at (5.25, 1.75) {31};
            \node at (4.25, 2.25) {32};
            \node at (5.25, 2.75) {61};
            \node at (4.25, 3.25) {34};
            \node at (5.25, 3.75) {35};
            \node at (4.25, 4.25) {36};
            \node at (5.25, 4.75) {37};
            \node at (4.25, 5.25) {38};
            \node at (5.25, 5.75) {65};
            \node at (4.25, 6.25) {40};
            \node at (5.25, 6.75) {41};
            \node at (4.25, 7.25) {41};
            \node at (4.75,6.25) {\textcolor{white}{43}};
            \node at (5.25, 7.25) {44};
            \node at (4.25, 6.75) {45};
            \node at (5.25, 6.25) {68};
            \node at (4.25, 5.75) {47};
            \node at (5.25, 5.25) {48};
            \node at (4.25, 4.75) {49};
            \node at (5.25, 4.25) {50};
            \node at (4.25, 3.75) {51};
            \node at (5.25, 3.25) {52};
            \node at (4.25, 2.75) {53};
            \node at (5.25, 2.25) {54};
            \node at (4.25, 1.75) {55};
            \node at (5.25, 1.25) {56};
            \node at (5.75, 2.25) {57};
            \node at (6.75, 1.75) {58};
            \node at (5.75, 1.25) {59};
            \node at (6.25, 2.25) {60};
            \node at (6.25, 3.25) {62};
            \node at (5.75, 4.25) {63};
            \node at (6.25, 5.25) {64};
            \node at (6.25, 6.25) {66};
            \node at (5.75, 7.25) {67};
            \node at (6.25, 6.75) {69};
            \node at (5.75, 7.75) {70};
            \draw[step=0.5cm,color=black, line width=1.5] (-0.5,0.5) grid (7,8);
        \end{tikzpicture}
    \end{center}
    \caption{The remainder 1 pattern for Island $15$}
    \label{fig:knightsightseeing15}
\end{figure}

``I see,'' says the Queen. ``The shoelace pattern is great, but there might be still some room for improvement.''

\subsection{The Rook, the Bishop, and the Queen}

``Ha, ha, ha. You all are so slow; I can do it way faster. I can finish surveying Island $n$ in just $n$ days,'' boasts the Rook.

``How will you be able to achieve such a feat?'' queries the King. 

The Rook responds, ``I will simply move along one row or column to survey the entire Island.'' 

``Can you do better?'' asks the Queen.

``No, and I can prove it! With the first move, I survey one row and one column. Every next move increases the number of surveyed rows or surveyed columns by 1. That means I need at least $n$ days to survey the island.''

The Bishop laughs at the Rook for his slow path. ``I can easily do better than this. You see, I can just start on the major diagonal and visit every county, excluding the two corner counties. Using this, I can cover Island $n$ in just $n-2$ days,'' the Bishop boasts. 

``Well, of course, my Bishop,'' the Rook starts, ``yet, as usual, you demonstrate your oversight. You see, since you can only see counties of the same color as your starting color, you would only cover half of the island!''

``Oh. My apologies. Anyhow, this is no matter. I can bring my brother, who is also a bishop. He traverses on the other color, and will be able to see the other half of the counties, so we will cover the whole island!'', the Bishop explains.

``Yes, yes, this is all very well, but it can only happen if the two major diagonals are of different colors, which happens on even-sized islands. What if you are on an odd-sized island?'' the King queries.

``Okay, I have a strategy for this as well! I call it the `hockey stick' strategy, after my favorite sport. There are two diagonals next to the major diagonal running from the bottom left portion of the island to the top right. I will start on one of these diagonals in county (3,2). From there, I will move onto the top diagonal's county (2,3) and check every county on that top diagonal except for the border counties,'' the Bishop explains. ``My plans for both colors for Island 7 are in Figure~\ref{fig:BishSightseeing}.''

\begin{figure}[ht!]
    \begin{center}
        \begin{tikzpicture}[scale=1]
            \fill [lightgray!50!white] (0,0) rectangle (0.5,0.5);
            \fill [lightgray!50!white] (0,1) rectangle (0.5,1.5);
            \fill [lightgray!50!white] (0,2) rectangle (0.5,2.5);
            \fill [lightgray!50!white] (0,3) rectangle (0.5,3.5);
            \fill [lightgray!50!white] (0.5,0.5) rectangle (1,1);
            \fill [lightgray!50!white] (0.5,1.5) rectangle (1,2);
            \fill [lightgray!50!white] (0.5,2.5) rectangle (1,3);
            \fill [lightgray!50!white] (1,0) rectangle (1.5,0.5);
            \fill [lightgray!50!white] (1,1) rectangle (1.5,1.5);
            \fill [lightgray!50!white] (1,2) rectangle (1.5,2.5);
            \fill [lightgray!50!white] (1,3) rectangle (1.5,3.5);
            \fill [lightgray!50!white] (1.5,0.5) rectangle (2,1);
            \fill [lightgray!50!white] (1.5,1.5) rectangle (2,2);
            \fill [lightgray!50!white] (1.5,2.5) rectangle (2,3);
            \fill [lightgray!50!white] (2,0) rectangle (2.5,0.5);
            \fill [lightgray!50!white] (2,1) rectangle (2.5,1.5);
            \fill [lightgray!50!white] (2,2) rectangle (2.5,2.5);
            \fill [lightgray!50!white] (2,3) rectangle (2.5,3.5);
            \fill [lightgray!50!white] (2.5,0.5) rectangle (3,1);
            \fill [lightgray!50!white] (2.5,1.5) rectangle (3,2);
            \fill [lightgray!50!white] (2.5,2.5) rectangle (3,3);
            \fill [lightgray!50!white] (3,0) rectangle (3.5,0.5);
            \fill [lightgray!50!white] (3,1) rectangle (3.5,1.5);
            \fill [lightgray!50!white] (3,2) rectangle (3.5,2.5);
            \fill [lightgray!50!white] (3,3) rectangle (3.5,3.5);
	    \node at (0.75, 0.75) {\bishop};
            \node at (1.25, 1.25) {2};
            \node at (1.75, 1.75) {3};
            \node at (2.25, 2.25) {4};
            \node at (2.75, 2.75) {5};
            \draw[step=0.5cm,color=black, line width=1.5] (0,0) grid (3.5,3.5);
        \end{tikzpicture}
        \quad\quad
        \begin{tikzpicture}[scale=1]
            \fill [lightgray!50!white] (0,0) rectangle (0.5,0.5);
            \fill [lightgray!50!white] (0,1) rectangle (0.5,1.5);
            \fill [lightgray!50!white] (0,2) rectangle (0.5,2.5);
            \fill [lightgray!50!white] (0,3) rectangle (0.5,3.5);
            \fill [lightgray!50!white] (0.5,0.5) rectangle (1,1);
            \fill [lightgray!50!white] (0.5,1.5) rectangle (1,2);
            \fill [lightgray!50!white] (0.5,2.5) rectangle (1,3);
            \fill [lightgray!50!white] (1,0) rectangle (1.5,0.5);
            \fill [lightgray!50!white] (1,1) rectangle (1.5,1.5);
            \fill [lightgray!50!white] (1,2) rectangle (1.5,2.5);
            \fill [lightgray!50!white] (1,3) rectangle (1.5,3.5);
            \fill [lightgray!50!white] (1.5,0.5) rectangle (2,1);
            \fill [lightgray!50!white] (1.5,1.5) rectangle (2,2);
            \fill [lightgray!50!white] (1.5,2.5) rectangle (2,3);
            \fill [lightgray!50!white] (2,0) rectangle (2.5,0.5);
            \fill [lightgray!50!white] (2,1) rectangle (2.5,1.5);
            \fill [lightgray!50!white] (2,2) rectangle (2.5,2.5);
            \fill [lightgray!50!white] (2,3) rectangle (2.5,3.5);
            \fill [lightgray!50!white] (2.5,0.5) rectangle (3,1);
            \fill [lightgray!50!white] (2.5,1.5) rectangle (3,2);
            \fill [lightgray!50!white] (2.5,2.5) rectangle (3,3);
            \fill [lightgray!50!white] (3,0) rectangle (3.5,0.5);
            \fill [lightgray!50!white] (3,1) rectangle (3.5,1.5);
            \fill [lightgray!50!white] (3,2) rectangle (3.5,2.5);
            \fill [lightgray!50!white] (3,3) rectangle (3.5,3.5);
            \node at (0.75, 1.25) {2};
            \node at (1.25, 0.75) {\bishop};
            \node at (1.25, 1.75) {3};
            \node at (1.75, 2.25) {4};
            \node at (2.25, 2.75) {5};
            \draw[step=0.5cm,color=black, line width=1.5] (0,0) grid (3.5,3.5);
        \end{tikzpicture}
    \end{center}
    \caption{Bishop surveying placement}
    \label{fig:BishSightseeing}
\end{figure}
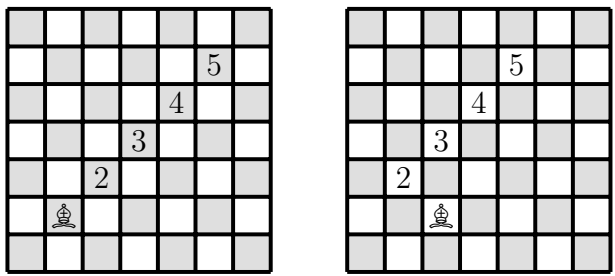

``Can you prove that this number is the minimum?'' the King asks. 

``Yes, I can. Here it is. We need to show that $n-3$ days are not enough for a particular color. Only consider the border rows and columns. For Island $n$, there are $2n-2$ counties of each color along the border. Any bishop can see at most 4 counties on the border, and any move from a particular county can only see at most two new counties on the border.''

``Very elegant! I assume you and your brother will always work in tandem?''

``Yes, of course. We are inseparable,'' the Bishop clarified as he went off to fetch his brother. 

When the Bishop left, the Queen and the Knight were left alone. The Queen was on her way to sleep when the Knight asks, ``Would you like me to help you with surveying, your majesty?''

``Ah, yes! I have already found a wonderful method. I can also move along the main diagonal, similar to the Bishop. However, I can skip more counties than those pathetic bishop brothers. I simply skip certain counties. I call it the `missing beads' method, in reference to a necklace I had that got chewed on by the royal dog. Here's a sketch I made for Island 9.''

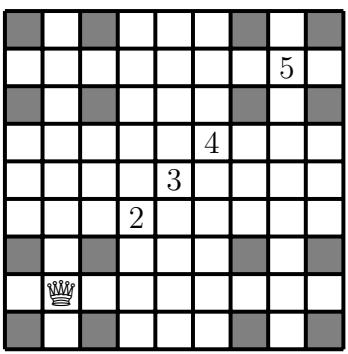
\begin{figure}[h!]
\begin{center}
        \begin{tikzpicture}[scale=1]
            \fill [gray] (0,0) rectangle (0.5,0.5);
            \fill [gray] (0,1) rectangle (0.5,1.5);
            \fill [gray] (0,3) rectangle (0.5,3.5);
            \fill [gray] (0,4) rectangle (0.5,4.5);
            \fill [gray] (1,0) rectangle (1.5,0.5);
            \fill [gray] (1,1) rectangle (1.5,1.5);
            \fill [gray] (1,3) rectangle (1.5,3.5);
            \fill [gray] (1,4) rectangle (1.5,4.5);
            \fill [gray] (3,0) rectangle (3.5,0.5);
            \fill [gray] (3,1) rectangle (3.5,1.5);
            \fill [gray] (3,3) rectangle (3.5,3.5);
            \fill [gray] (3,4) rectangle (3.5,4.5);
            \fill [gray] (4,0) rectangle (4.5,0.5);
            \fill [gray] (4,1) rectangle (4.5,1.5);
            \fill [gray] (4,3) rectangle (4.5,3.5);
            \fill [gray] (4,4) rectangle (4.5,4.5);
	    \node at (0.75, 0.75) {\queen};
            \node at (1.75, 1.75) {2};
            \node at (2.25, 2.25) {3};
            \node at (2.75, 2.75) {4};
            \node at (3.75, 3.75) {5};
            \draw[step=0.5cm,color=black, line width=1.5] (0,0) grid (4.5,4.5);
        \end{tikzpicture}
\caption{Missing beads method on Island 9}
\end{center}
\end{figure}

``Every move I make along the diagonal covers a new row and column. The only counties not belonging to the rows and columns I visited are colored in gray; and I can see those counties diagonally.''

``Fascinating,'' said the Knight, ``Can you continue the pattern for larger islands?''

``Big islands are too much trouble. You can have the Rook survey those.''

``Okay then,'' said the Knight. After a pause, he spoke again, ``Don't you want to find a good consistent strategy?''

``Leave that to the scholars!'' the Queen exclaimed very angrily. ``I've done enough already, you don't have to make me survey massive landmasses!''

Not wanting to face an angry Queen, the Knight slowly backed out of the room.

\subsection{The King}

Finally, the King resolves to survey his islands by himself. In one day, the King can move one county in any direction. He decides on using a spiral pattern, moving around the island toward the center. The spiral consists of straight segments. For example, for Island $n$, he plans to start on (2,2), then move to $(n-1,2)$, then to $(n-1,n-1)$, then to $(2,n-1)$, and then to (2,5). This finishes the first loop of the spiral. He then begins another loop from (2,5) to $(n-4,5)$, and so on. The King plots the route for his venture for Islands 9 and 12, as shown in Figure~\ref{fig:kingstraightspiral}.

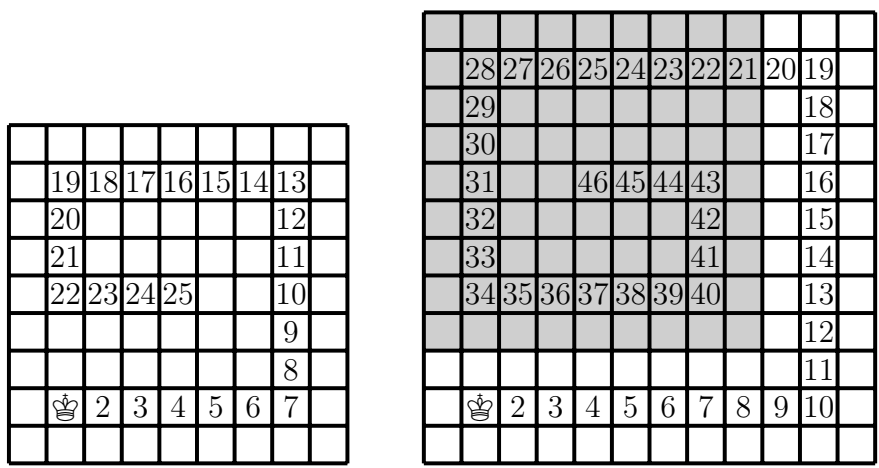
\begin{figure}[ht!]
    \begin{center}
        \begin{tikzpicture}[scale=1]
            \node at (0.75, 0.75) {\king};
            \node at (0.75, 2.25) {22};
            \node at (0.75, 2.75) {21};
            \node at (0.75, 3.25) {20};
            \node at (0.75, 3.75) {19};
            \node at (1.25, 0.75) {2};
            \node at (1.25, 2.25) {23};
            \node at (1.25, 3.75) {18};
            \node at (1.75, 0.75) {3};
            \node at (1.75, 2.25) {24};
            \node at (1.75, 3.75) {17};
            \node at (2.25, 0.75) {4};
            \node at (2.25, 2.25) {25};
            \node at (2.25, 3.75) {16};
            \node at (2.75, 0.75) {5};
            \node at (2.75, 3.75) {15};
            \node at (3.25, 0.75) {6};
            \node at (3.25, 3.75) {14};
            \node at (3.75, 0.75) {7};
            \node at (3.75, 1.25) {8};
            \node at (3.75, 1.75) {9};
            \node at (3.75, 2.25) {10};
            \node at (3.75, 2.75) {11};
            \node at (3.75, 3.25) {12};
            \node at (3.75, 3.75) {13};
            \draw[step=0.5cm,color=black, line width=1.5] (0,0) grid (4.5,4.5);
        \end{tikzpicture}
           \quad\quad \begin{tikzpicture}[scale=1]
           \fill [lightgray!75!white] (0,1.5) rectangle (4.5,6);
            \node at (0.75, 0.75) {\king};
            \node at (0.75, 2.25) {34};
            \node at (0.75, 2.75) {33};
            \node at (0.75, 3.25) {32};
            \node at (0.75, 3.75) {31};
            \node at (0.75, 4.25) {30};
            \node at (0.75, 4.75) {29};
            \node at (0.75, 5.25) {28};
            \node at (1.25, 0.75) {2};
            \node at (1.25, 2.25) {35};
            \node at (1.25, 5.25) {27};
            \node at (1.75, 0.75) {3};
            \node at (1.75, 2.25) {36};
            \node at (1.75, 5.25) {26};
            \node at (2.25, 0.75) {4};
            \node at (2.25, 2.25) {37};
            \node at (2.25, 3.75) {46};
            \node at (2.25, 5.25) {25};
            \node at (2.75, 0.75) {5};
            \node at (2.75, 2.25) {38};
            \node at (2.75, 3.75) {45};
            \node at (2.75, 5.25) {24};
            \node at (3.25, 0.75) {6};
            \node at (3.25, 2.25) {39};
            \node at (3.25, 3.75) {44};
            \node at (3.25, 5.25) {23};
            \node at (3.75, 0.75) {7};
            \node at (3.75, 2.25) {40};
            \node at (3.75, 2.75) {41};
            \node at (3.75, 3.25) {42};
            \node at (3.75, 3.75) {43};
            \node at (3.75, 5.25) {22};
            \node at (4.25, 0.75) {8};
            \node at (4.25, 5.25) {21};
            \node at (4.75, 0.75) {9};
            \node at (4.75, 5.25) {20};
            \node at (5.25, 0.75) {10};
            \node at (5.25, 1.25) {11};
            \node at (5.25, 1.75) {12};
            \node at (5.25, 2.25) {13};
            \node at (5.25, 2.75) {14};
            \node at (5.25, 3.25) {15};
            \node at (5.25, 3.75) {16};
            \node at (5.25, 4.25) {17};
            \node at (5.25, 4.75) {18};
            \node at (5.25, 5.25) {19};
            \draw[step=0.5cm,color=black, line width=1.5] (0,0) grid (6,6);
        \end{tikzpicture}
    \end{center}
    \caption{King's spiral plan for Islands 9 \& 12}
    \label{fig:kingstraightspiral}
\end{figure}

From the diagram, the King knows that it will take him 25 and 46 days to survey Islands 9 and 12, respectively. The King wants to find the number of days this will take on any island, as he doesn't like any unexpected delays.

``What is this gray area?'' asks the Queen.

The King replies, ``If we rotate the path for Island 9, it fits inside the gray area of Island 12. We can do a similar comparison for any two Islands of sizes $n$ and $n+3$. This allows us to create a recursive formula. Suppose $f(n)$ is the number of days needed to survey Island $n$, then
\[f(n+3)= f(n)+2n+3,\]
where $n\ge 3$,'' explains the King.

``Why does it not work when $n$ is equal to 1 or 2?'' asks the Queen.

The King says, ``We enter each smaller Island through county (1,2). The corresponding day is included in the larger Island. If the smaller Island is of size 1 or 2, we do not need any more days; but we still need one day to survey it.'' The King proceeds in his explanation, ``Using the recursive formula, we can start by calculating the number of days for each remainder of 3.'' The King calculates the length of the spiral manually for Islands up to size 8 and puts them in Table~\ref{table:kingsurveyingsmall}. He needs only the values for Islands 3, 4, and 5 to plug in into the recursive formula.

\begin{table}[ht!]
\begin{center}
\begin{tabular}{|r|c|c|c|c|c|c|c|c|}
\hline
\textbf{Island Number}         & 1 & 2 & 3 & 4 & 5 & 6  & 7 &8 \\
\textbf{Days Needed} & 1 & 1 & 1 & 4 & 7 & 10 & 15& 20 \\
\hline
\end{tabular}
\caption{The number of days the King needs to survey for Islands 1--8 using the straight spiral method.}
\label{table:kingsurveyingsmall}
\end{center}
\end{table}

If $n > 2$ is the side length of the island and $n=3k+r$, where $k$ is a non-negative integer, and $r$ is the remainder when $n$ is divided by 3, then 
\[f(n)=3k^2+2rk+r - 2 = \left\lfloor \frac{n^2+2}{3} \right\rfloor - 2.\]
``By the way,'' the King mentions, ``After ignoring the first two values, this sequence can be found in the Online Encyclopedia of Integer Sequences as sequence A071408.''

The Knight interrupts to brag, ``Unfortunately, your majesty, it will take you 25 days on Island 9, while I can survey it in 24 days. Seems that being royalty does not make you good at everything.'' That was a shot which was too painful for the King. Fuming, he leaves for his favorite pub, the \textit{Royal Drink}, hoping to be clear-headed for his journey tomorrow.

The King, however, did not anticipate to be spending 6 hours at the pub playing beer pong. This delayed him in his departure, and he left much later in the day than expected. Additionally, he brought several bottles of his favorite beer brand, as well as his cooler, in order to be able to relax each night with a nice cool beer. 

Waving his wife goodbye, he drunkenly said, ``Worry not, my love! I'll be back in a few weeks!'' With a hearty laugh, he sauntered out of the throne room and into his balloon, setting off for Island 12. Worried, the Queen looked on, wondering how long he would be gone.

Once the King reached the island, he set himself down on county (2,2). He began walking in his spiral path, but to the island's inhabitants, it soon became very clear that his spiral path did not consist of straight segments as he had expected but a very zig-zagged spiral. However, the King continued walking in his ``straight path'', having no idea that the drinks he was consuming each night were preventing him from walking straight. To the King's surprise, he had surveyed all the counties faster than expected! Delighted by this news, he summoned a royal hot air balloon and made his way back to the royal castle. 

The Queen sat, weeks later, pondering upon her husband's decision to drink 5 gallons of beer prior to his departure, when, to her shock, her husband came stumbling back into the courtyard! Delighted, she escorted him back to the throne room, as he seemed to be having trouble walking straight. Must be tired after his long trip, she thought. ``How are you back so early?" the Queen asked wondrously. "I thought you would take longer."

``Well, my dear, it appears that there was an error in my calculations", the King responded. 

``Uh, my lord?'' the Bishop queried. ``It appears that there was no error in your calculations. Your plan would have taken 46 days. However, it seems that you were having a great difficulty walking straight. I heard this news from the inhabitants of Island 12, where you traveled. It seems that you walked in a zig-zag spiral, and this route took 40 days!''

``How can this be? I am very confident that I walked straight. Your nonsensical ideas are not appreciated here. Please leave.''

``But my lord, they also sent me a diagram. Look here,'' the Bishop eagerly said.

\begin{figure}[ht!]
    \begin{center}
        \begin{tikzpicture}[scale=1]
            \node at (0.75, 0.75) {\king};
            \node at (0.75, 2.25) {34};
            \node at (0.75, 3.25) {32};
            \node at (0.75, 4.25) {30};
            \node at (0.75, 5.25) {28};
            \node at (1.25, 1.25) {2};
            \node at (1.25, 2.25) {35};
            \node at (1.25, 2.75) {33};
            \node at (1.25, 3.75) {31};
            \node at (1.25, 4.75) {29};
            \node at (1.25, 5.25) {27};
            \node at (1.75, 0.75) {3};
            \node at (1.75, 2.75) {36};
            \node at (1.75, 4.75) {26};
            \node at (2.25, 1.25) {4};
            \node at (2.25, 3.25) {37};
            \node at (2.25, 5.25) {25};
            \node at (2.75, 0.75) {5};
            \node at (2.75, 2.75) {38};
            \node at (2.75, 4.75) {24};
            \node at (3.25, 1.25) {6};
            \node at (3.25, 2.75) {40};
            \node at (3.25, 3.25) {39};
            \node at (3.25, 5.25) {23};
            \node at (3.75, 0.75) {7};
            \node at (3.75, 4.75) {22};
            \node at (4.25, 1.25) {8};
            \node at (4.25, 5.25) {21};
            \node at (4.75, 0.75) {9};
            \node at (4.75, 1.25) {11};
            \node at (4.75, 2.25) {13};
            \node at (4.75, 3.25) {15};
            \node at (4.75, 4.25) {17};
            \node at (4.75, 4.75) {20};
            \node at (5.25, 0.75) {10};
            \node at (5.25, 1.75) {12};
            \node at (5.25, 2.75) {14};
            \node at (5.25, 3.75) {16};
            \node at (5.25, 4.75) {18};
            \node at (5.25, 5.25) {19};
            \draw[step=0.5cm,color=black, line width=1.5] (0,0) grid (6,6);
        \end{tikzpicture}
    \end{center}
    \caption{King surveying Island 12}
    \label{fig:Kingzigzag12}
\end{figure}

Looking down at the paper handed to him, the King saw that indeed, he had walked in a zig-zag spiral. Astounded by this information, he began pondering as to how and why this route had been faster.

As the King continued thinking, he came across an idea. When he first lands on the island, he can see 9 counties, including the county he's in. When he travels in a straight line, he sees 3 new counties every day. However, the King realizes that by moving diagonally, he sees 5 new counties every day. Unfortunately, he can't survey islands only by diagonal moves. By moving in a zig-zag fashion, starting from the second day, he sees 4 new counties every day. Finally, the King had figured out why his route was faster! Having been drunk, he was unable to walk straight, and he saw more counties per day by moving in a zig-zag manner.

The King decides to implement his new zig-zag spiraling method on other islands too. The King tried to compute how many days the new method would take for any island. However, it looked like a nightmare to calculate! But looking closer, he realizes that it can be simplified. In each zig-zag segment, he can merge the line of counties closer to the island's center with the line of counties further from the center. This forms a straight line segment, so for calculation, he can approximate the zig-zag pattern by a single-line spiral, with a space of 3 counties in between each segment of the spiral.

The length of the new straight spiral, with more distance between the loops, can be calculated similarly to the previous calculation for the straight spiral method: he can compare the spiral for Island $n+4$ with the spiral for Island $n$, as in Figure~\ref{fig:ConcentricSquares}.

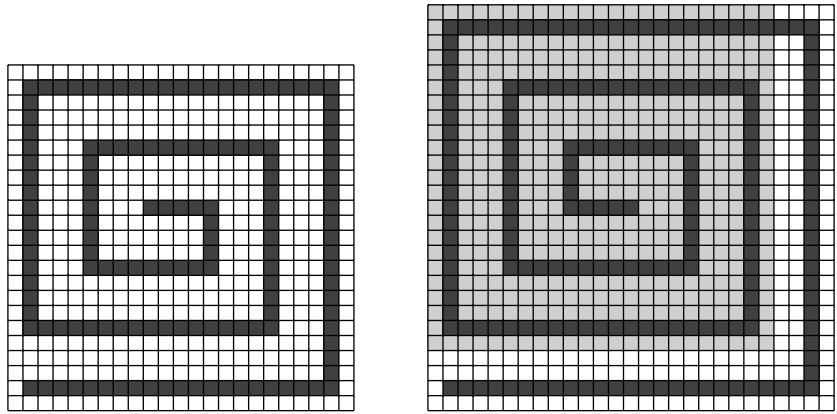
\begin{figure}[htp]
\begin{center}
\begin{tikzpicture}[scale=0.4]

            \fill [darkgray] (0,0) rectangle (10.5,0.5);
            \fill [darkgray] (0.5,10) rectangle (2,10.5);
            \fill [darkgray] (0.5,2) rectangle (2,2.5);
            \fill [darkgray] (0.5,2) rectangle (0,10.5);
            \fill [darkgray] (2,2) rectangle (8.5,2.5);
            \fill [darkgray] (10,10.5) rectangle (10.5,0.5);
            \fill [darkgray] (10.5,10.5) rectangle (2,10);
            \fill [darkgray] (2.5,4) rectangle (2,8);
            \fill [darkgray] (6.5,4) rectangle (2.5,4.5);
            \fill [darkgray] (8.5,2.5) rectangle (8,8.5);
            \fill [darkgray] (2,8) rectangle (8.5,8.5);
            \fill [darkgray] (4,8) rectangle (8.5,8.5);
            \fill [darkgray] (4,6) rectangle (6,6.5);
            \fill [darkgray] (6,4.5) rectangle (6.5,6.5);
            \draw[step=0.5cm,color=black, line width=0.5] (-0.5,-0.5) grid (11,11);
\end{tikzpicture}
\quad
\quad
\begin{tikzpicture}[scale=0.4]
          \fill [lightgray!75!white] (-0.5,1.5) rectangle (11,13);
\fill [darkgray] (0,0) rectangle (12.5,0.5);
            \fill [darkgray] (12,0) rectangle (12.5,12.5);
            \fill [darkgray] (0,12) rectangle (12.5,12.5);
            \fill [darkgray] (0.5,2) rectangle (2,2.5);
            \fill [darkgray] (0.5,2) rectangle (0,12.5);
            \fill [darkgray] (2,2) rectangle (10.5,2.5);
            \fill [darkgray] (10,10.5) rectangle (10.5,2.5);
            \fill [darkgray] (10.5,10.5) rectangle (2,10);
            \fill [darkgray] (2.5,4) rectangle (2,10);
            \fill [darkgray] (2.5,4.5) rectangle (4,4);
            \fill [darkgray] (8.5,4.5) rectangle (4,4);
            \fill [darkgray] (8.5,4.5) rectangle (8,8.5);
            \fill [darkgray] (4,8) rectangle (8.5,8.5);
            \fill [darkgray] (4,8) rectangle (4.5,6);
            \fill [darkgray] (4,8) rectangle (8.5,8.5);
            \fill [darkgray] (4.5,6) rectangle (6,6.5);
            \fill [darkgray] (6,6) rectangle (6.5,6.5);
            \draw[step=0.5cm,color=black, line width=0.5] (-0.5,-0.5) grid (13,13);
\end{tikzpicture}
\end{center}
\caption{Recursion for Island 27}
\label{fig:ConcentricSquares}
\end{figure}

We can create a similar recursive formula to the one before, where $n\ge 3$, and $g(n)$ is the number of days needed for the spiral on Island $n$:
\[g(n+4)= g(n)+2(n+3).\]
The King makes an explicit formula for his new wider spiral. If $n$ is the side length of the island and $n=4k+r$, where $k$ is a non-negative integer, and $r$ is the remainder when $n$ is divided by 4, then
\[g(n)=4k^2+(2r+2)k+r-2.\]
We can rewrite this in terms of $n$ as
\[g(n)=\left\lfloor \frac{n^2+2n+1}{4} \right\rfloor - 2 - t(r),\]
where $t(3) = 1$, and zero for other remainders. If we want to be fancy, we can express $t(r)$ in terms of $n$ as $\left\lfloor \frac{n - 4\lfloor \frac{n}{4}  \rfloor}{3}  \right\rfloor$.

The Queen checks his work. ``I think this simplified spiral is short by a day compared to the actual number of days needed for surveying,'' she says. ``You forgot to include your day in the county at (2, 5). You had to travel here, or else you wouldn't have seen the county at (1, 4).''

``Oops,'' the King says, ``I guess we'll call that an extra county, then. Are there any more extra counties I have to visit?''

``No,'' the Queen says, ``You will not need an extra county in the $n-4$ by $n-4$ square inside Island $n$ because you enter the smaller square at (3, 1), where you can see (4, 1) from. Nor will you need an extra county in the $n-8$ by $n-8$ square, and so on. You only need at most 1 extra county per island.''

``Wait,'' the King says. ``Sometimes the simplified spiral takes the same number of days as the actual number of days needed. I only need an extra day on island sizes with a remainder of 2, 3, 5, 6, and 7 when divided by 8.''

``Why is that?'' the Queen asks.

``Well, in Islands 1, 4, and 8, the spirals contain an extra county. When these spirals are nested into islands with side length 8 counties larger, I can remove the extra day in the smaller spiral. But in order to survey $(1,4)$, I need to visit an extra county in the larger spiral, so it cancels, and I don't need to add an extra day. But Islands 2, 3, 5, 6, and 7 do not contain extra counties. So there is nothing to remove in the smaller spiral, but I need to add a day for the larger spiral. So when these spirals are nested into larger islands, I need to add a day.''

The King creates a new function $G(n)$ that is the actual number of days that he needs for surveying Island $n$ using the zig-zag method. Thus, the formula is
\[G(n) = g(n) + s(r),\]
and 
\[G(n)=\left\lfloor \frac{n^2+2n+1}{4} \right\rfloor - 2 + s(r)-t(r),\]
where $t(3)=s(2)=s(3)=s(5)=s(6)=s(7)=1$, and zero for all other remainders.

Figure~\ref{fig:king11sightseeing} shows an example of how to form the King's actual plan from the spiral.

\begin{figure}[ht!]
    \begin{center}
        \begin{tikzpicture}[scale=1]
            \fill [black] (0.5,2) rectangle (1,2.5);\node at (0.75, 0.75) {\king};
            \node at (0.75, 2.25) {\textcolor{white}{30}};
            \node at (0.75, 2.75) {29};
            \node at (0.75, 3.25) {28};
            \node at (0.75, 3.75) {27};
            \node at (0.75, 4.25) {26};
            \node at (0.75, 4.75) {25};
            \node at (1.25, 0.75) {2};
            \node at (1.25, 2.75) {31};
            \node at (1.25, 4.75) {24};
            \node at (1.75, 0.75) {3};
            \node at (1.75, 2.75) {32};
            \node at (1.75, 4.75) {23};
            \node at (2.25, 0.75) {4};
            \node at (2.25, 2.75) {33};
            \node at (2.25, 4.75) {22};
            \node at (2.75, 0.75) {5};
            \node at (2.75, 2.75) {34};
            \node at (2.75, 4.75) {21};
            \node at (3.25, 0.75) {6};
            \node at (3.25, 4.75) {20};
            \node at (3.75, 0.75) {7};
            \node at (3.75, 4.75) {19};
            \node at (4.25, 0.75) {8};
            \node at (4.25, 4.75) {18};
            \node at (4.75, 0.75) {9};
            \node at (4.75, 1.25) {10};
            \node at (4.75, 1.75) {11};
            \node at (4.75, 2.25) {12};
            \node at (4.75, 2.75) {13};
            \node at (4.75, 3.25) {14};
            \node at (4.75, 3.75) {15};
            \node at (4.75, 4.25) {16};
            \node at (4.75, 4.75) {17};
            \draw[step=0.5cm,color=black, line width=1.5] (0,0) grid (5.5,5.5);
        \end{tikzpicture}
        \quad\quad    
        \begin{tikzpicture}[scale=1]
            \node at (0.75, 0.75) {\king};
            \node at (0.75, 2.25) {30};
            \node at (0.75, 3.25) {28};
            \node at (0.75, 4.25) {26};
            \node at (0.75, 4.75) {25};
            \node at (1.25, 1.25) {2};
            \node at (1.25, 2.25) {31};
            \node at (1.25, 2.75) {29};
            \node at (1.25, 3.75) {27};
            \node at (1.25, 4.25) {24};
            \node at (1.75, 0.75) {3};
            \node at (1.75, 2.25) {32};
            \node at (1.75, 4.75) {23};
            \node at (2.25, 1.25) {4};
            \node at (2.25, 2.75) {33};
            \node at (2.25, 4.25) {22};
            \node at (2.75, 0.75) {5};
            \node at (2.75, 2.25) {34};
            \node at (2.75, 4.75) {21};
            \node at (3.25, 1.25) {6};
            \node at (3.25, 4.25) {20};
            \node at (3.75, 0.75) {7};
            \node at (3.75, 4.75) {19};
            \node at (4.25, 1.25) {8};
            \node at (4.25, 1.75) {11};
            \node at (4.25, 2.75) {13};
            \node at (4.25, 3.75) {15};
            \node at (4.25, 4.25) {18};
            \node at (4.75, 0.75) {9};
            \node at (4.75, 1.25) {10};
            \node at (4.75, 2.25) {12};
            \node at (4.75, 3.25) {14};
            \node at (4.75, 4.25) {16};
            \node at (4.75, 4.75) {17};
            \draw[step=0.5cm,color=black, line width=1.5] (0,0) grid (5.5,5.5);
        \end{tikzpicture}
    \end{center}
    \caption{King surveying Island 11.}
    \label{fig:king11sightseeing}
\end{figure}
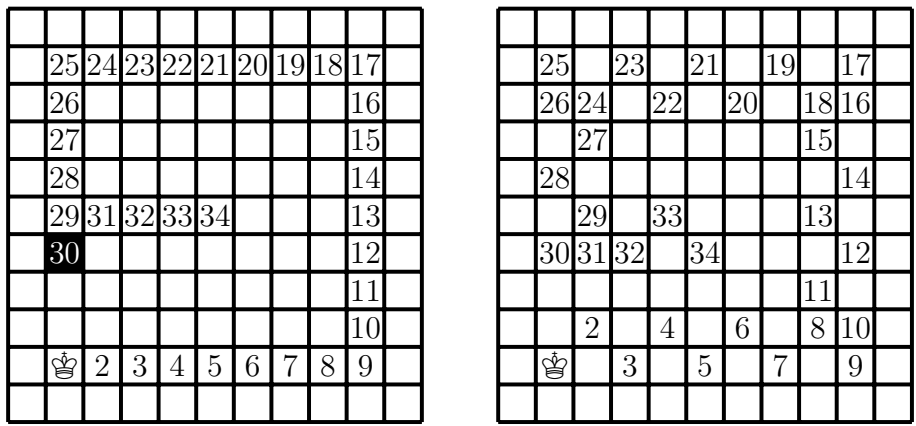

The Queen says, ``There is an issue with smaller islands.''

``For Island 1, the function $G(1)$ gives $-1$, which is not accurate. I actually need 1 day. For Island 2, the formula is correct,'' the King replies. ``And for islands 3--6, we do not need the extra county as the spiral covers the whole island without them: so $g(n)=G(n)$.''

The King manually calculates the number of days he would need for Islands 1 to 15 and puts them in Table~\ref{tab:Island-sightseeing}.

\begin{table}[ht!]
\centering
    \begin{tabular}{|c|c|c|c|c|c|c|c|c|c|c|c|c|c|c|c|}
    \hline
         \textbf{Islands} &1&2&3&4&5&6&7&8&9&10&11&12&13&14&15 \\
         \hline
\textbf{Days} &1&1&1&4&7&10&14&18&23&29&34&40&48&55&62 \\
   \hline
    \end{tabular}
\caption{The number of days the King needs to survey for Islands 1--15 using the zig-zag spiral method.}
\label{tab:Island-sightseeing}
\end{table}

``Do you have a proof that you can't do better?'' the Queen asks. The King replies that he knows his plan is optimal for islands up to size 7. For example, consider Island 6. To survey the corner counties, the King needs to visit each 2 by 2 corner square. He also needs to make at least 3 moves to get from one 2 by 2 corner square to another. Thus, he needs at least 10 days for Island 6, as seen in Figure~\ref{fig:kingsightseeing6}.

\begin{figure}[ht!]
    \begin{center}
        \begin{tikzpicture}[scale=1]
            \node at (0.75, 0.75) {\king};
            \node at (0.75, 2.25) {10};
            \node at (1.25, 2.25) {9};
            \node at (1.75, 1.75) {8};
            \node at (2.25, 2.25) {7};
            \node at (2.25, 1.75) {6};
            \node at (1.75, 1.25) {5};
            \node at (2.25, 0.75) {4};
            \node at (1.75, 0.75) {3};
            \node at (1.25, 1.25) {2};
            
            \draw[step=0.5cm,color=black, line width=1.5] (0,0) grid (3.0,3.0);
        \end{tikzpicture}
        
\end{center}
   \caption{King surveying Island 6}
   \label{fig:kingsightseeing6}
\end{figure}
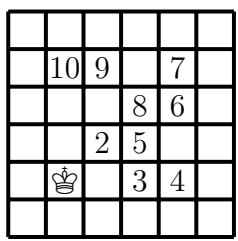

\section{Non-Attacking Trapping}\label{sec:trapping}

Following these surveys, the King identified a potential threat to the well-being of the kingdom: enemy chess pieces, who appeared to be plotting to take over Chessland as territory for their growing kingdom. Upon these observations, the King begins commanding his troops: ``My soldiers, it appears that the enemies are planning something shady, and we do not know what threat they will pose. Each of you will face your enemy counterparts, as you will know how your counterpart moves. Trap the enemy piece so that you see every county that the enemy could move to. Once you have trapped the enemy, there should not remain any counties to which they can move. However, you should not be able to attack the enemy, so it follows that they should not be able to attack you. We would prefer not to enter a full-fledged conflict and merely investigate these nuisances. Additionally, you should not be able to see the county your fellow soldiers are in as it would be unfortunate if you attacked an ally mistaking them for the enemy. We do not have enough soldiers to fend off a full-fledged attack. Because of this, you all have to trap your counterparts using as few soldiers as possible. You have your orders.''

\subsection{Knights}

``My good Knight, my pawns have identified that the enemy chess piece invading Island 7 is in fact an enemy Knight, so I have decided to send you. I need you to be as prepared as possible, so please plan based on any location that the opposing Knight could be. We need to prevent him from moving to any new county so that we can trap him,'' the King tells the Knight.

The Knight nods and goes off to calculate how many soldiers he will need. The Knight knows that he and his Knight soldiers can see at most 8 counties, as shown in Figure~\ref{fig:Knight's Sight}.

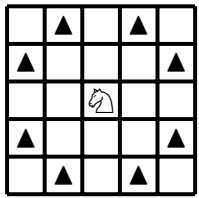
\begin{figure}[ht!]
\begin{center}
\begin{tikzpicture}
\draw[step=0.5cm,color=black, line width=1.5] (0,0) grid (2.5,2.5);
\node at (0.25,0.25) {};
\node at (0.25,0.75) {$\blacktriangle$};
\node at (0.25,1.25) {};
\node at (0.25,1.75) {$\blacktriangle$};
\node at (0.25,2.25) {};
\node at (0.75,0.25) {$\blacktriangle$};
\node at (0.75,0.75) {};
\node at (0.75,1.25) {};
\node at (0.75,1.75) {};
\node at (0.75,2.25) {$\blacktriangle$};
\node at (1.25,0.25) {};
\node at (1.25,0.75) {};
\node at (1.25,1.25) {\knight};
\node at (1.25,1.75) {};
\node at (1.25,2.25) {};
\node at (1.75,0.25) {$\blacktriangle$};
\node at (1.75,0.75) {};
\node at (1.75,1.25) {};
\node at (1.75,1.75) {};
\node at (1.75,2.25) {$\blacktriangle$};
\node at (2.25,0.25) {};
\node at (2.25,0.75) {$\blacktriangle$};
\node at (2.25,1.25) {};
\node at (2.25,1.75) {$\blacktriangle$};
\node at (2.25,2.25) {};
\end{tikzpicture}
\caption{The counties the Knight can see}
\label{fig:Knight's Sight}
\end{center}
\end{figure}

Since a knight can see fewer counties when he is nearer to the boundaries of the island, the Knight divides his task into cases, represented in Figure~\ref{fig:knighttrapping}. For each case, he figures out how many knight soldiers are needed to trap the enemy Knight.

\begin{enumerate}
\item The enemy Knight is at least two counties away from any border.
\item The enemy Knight is one county away from one border and at least two counties away from another border.
\item The enemy Knight is one county away from both borders, and the island size is at least 5.
\item The enemy Knight is one county away from both borders on Island 4.
\item The enemy Knight is on one border and one county away from another border.
\item The enemy Knight is in the corner county.
\end{enumerate}

\begin{figure}[ht!]
\begin{center}
\begin{tikzpicture}
\draw[step=0.5cm,color=black, line width=1.5] (0,0) grid (2.5,2.5);
\node at (0.75,1.75) {\knight};
\node at (1.75,1.75) {\knight};
\node at (1.25,1.25) {\knightB};
\node at (0.75,0.75) {\knight};
\node at (1.75,0.75) {\knight};
\end{tikzpicture}
\begin{tikzpicture}
\draw[step=0.5cm,color=black, line width=1.5] (0,0) grid (2.5,2.5);
\node at (0.75,1.25) {\knight};
\node at (1.75,1.25) {\knight};
\node at (1.25,1.75) {\knightB};
\node at (1.25,0.75) {\knight};
\end{tikzpicture}
\begin{tikzpicture}
\draw[step=0.5cm,color=black, line width=1.5] (0,0) grid (2.5,2.5);
\node at (1.25,0.25) {\knight};
\node at (0.75,1.75) {\knightB};
\node at (2.25,1.25) {\knight};
\end{tikzpicture}
\begin{tikzpicture}
\draw[step=0.5cm,color=black, line width=1.5] (0,0) grid (2,2);
\node at (1.25,1.75) {\knight};
\node at (0.75,1.25) {\knightB};
\node at (0.25,0.75) {\knight};
\node at (1.25,0.75) {\knight};
\end{tikzpicture}
\begin{tikzpicture}
\draw[step=0.5cm,color=black, line width=1.5] (0,0) grid (2,2);
\node at (1.75,1.75) {\knight};
\node at (0.75,1.75) {\knightB};
\node at (1.25,0.25) {\knight};
\end{tikzpicture}
\begin{tikzpicture}
\draw[step=0.5cm,color=black, line width=1.5] (0,0) grid (2,2);
\node at (0.25,1.75) {\knightB};
\node at (1.75,0.25) {\knight};
\end{tikzpicture}
\caption{Trapping the enemy Knight in different cases}
\label{fig:knighttrapping}
\end{center}
\end{figure}
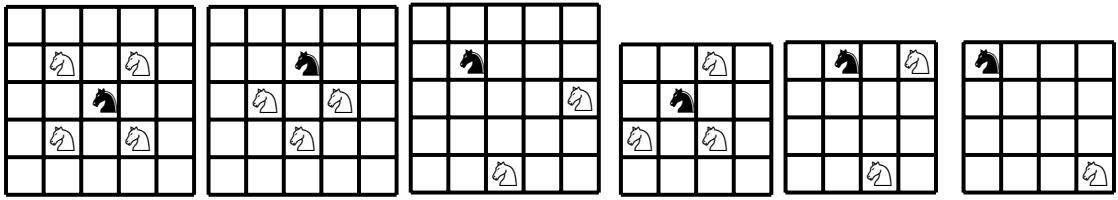

The Knight proudly shows the King how many of his soldiers he will need in order to trap the enemy knight, depending on where it is.

``Hold up, this is getting too confusing! Would you mind organizing this for me?'' 

``Yes, my King,'' the Knight says as he dashes away. After an hour, the Knight returns with his newly organized chart (shown in Figure~\ref{fig:knightanswersheet}) and shows it to the King.

\begin{figure}[ht!]
\begin{center}
\begin{tikzpicture}
\draw[step=0.5cm,color=black, line width=1.5] (0,0) grid (4.5,4.5);
\node at (0.25,0.25) {1};
\node at (0.25,0.75) {2};
\node at (0.25,1.25) {2};
\node at (0.25,1.75) {2};
\node at (0.25,2.25) {:};
\node at (0.25,2.75) {2};
\node at (0.25,3.25) {2};
\node at (0.25,3.75) {2};
\node at (0.25,4.25) {1};
\node at (0.75,0.25) {2};
\node at (0.75,0.75) {2};
\node at (0.75,1.25) {3};
\node at (0.75,1.75) {3};
\node at (0.75,2.25) {:};
\node at (0.75,2.75) {3};
\node at (0.75,3.25) {3};
\node at (0.75,3.75) {2};
\node at (0.75,4.25) {2};
\node at (1.25,0.25) {2};
\node at (1.25,0.75) {3};
\node at (1.25,1.25) {4};
\node at (1.25,1.75) {4};
\node at (1.25,2.25) {:};
\node at (1.25,2.75) {4};
\node at (1.25,3.25) {4};
\node at (1.25,3.75) {3};
\node at (1.25,4.25) {2};
\node at (1.75,0.25) {2};
\node at (1.75,0.75) {3};
\node at (1.75,1.25) {4};
\node at (1.75,1.75) {4};
\node at (1.75,2.25) {:};
\node at (1.75,2.75) {4};
\node at (1.75,3.25) {4};
\node at (1.75,3.75) {3};
\node at (1.75,4.25) {2};
\node at (2.25,0.25) {...};
\node at (2.25,0.75) {...};
\node at (2.25,1.25) {...};
\node at (2.25,1.75) {...};
\node at (2.25,2.25) {$\cdot$};
\node at (2.25,2.75) {...};
\node at (2.25,3.25) {...};
\node at (2.25,3.75) {...};
\node at (2.25,4.25) {...};
\node at (2.75,0.25) {2};
\node at (2.75,0.75) {3};
\node at (2.75,1.25) {4};
\node at (2.75,1.75) {4};
\node at (2.75,2.25) {:};
\node at (2.75,2.75) {4};
\node at (2.75,3.25) {4};
\node at (2.75,3.75) {3};
\node at (2.75,4.25) {2};
\node at (3.25,0.25) {2};
\node at (3.25,0.75) {3};
\node at (3.25,1.25) {4};
\node at (3.25,1.75) {4};
\node at (3.25,2.25) {:};
\node at (3.25,2.75) {4};
\node at (3.25,3.25) {4};
\node at (3.25,3.75) {3};
\node at (3.25,4.25) {2};
\node at (3.75,0.25) {2};
\node at (3.75,0.75) {2};
\node at (3.75,1.25) {3};
\node at (3.75,1.75) {3};
\node at (3.75,2.25) {:};
\node at (3.75,2.75) {3};
\node at (3.75,3.25) {3};
\node at (3.75,3.75) {2};
\node at (3.75,4.25) {2};
\node at (4.25,0.25) {1};
\node at (4.25,0.75) {2};
\node at (4.25,1.25) {2};
\node at (4.25,1.75) {2};
\node at (4.25,2.25) {:};
\node at (4.25,2.75) {2};
\node at (4.25,3.25) {2};
\node at (4.25,3.75) {2};
\node at (4.25,4.25) {1};
\end{tikzpicture}
\end{center}
\caption{Knight answer sheet}
    \label{fig:knightanswersheet}
\end{figure}

``The numbers represent how many soldiers we need based on where the enemy knight is, sir. For example, if the enemy is in county (3,4), we will need 4 knights, but if the enemy is in county (2,2), we will need 2 knights,'' the Knight explains.

The Knight continues, ``We can prove that this is the best we can do using the fact that two knights that aren't attacking each other can simultaneously see at most 2 counties.''

``If the enemy Knight is at least two counties away from the border, he sees 8 counties. Since each of our knights can see at most 2 of these counties, we will need at least 4 knights to restrict the enemy's movement.''

``In general, if the enemy Knight sees $k$ counties, we need at least $\lceil\frac{k}{2}\rceil$ of our knights. As it happens, we can always trap the enemy Knight with $\lceil\frac{k}{2}\rceil$ of our knights.'' 

``This is brilliant! What shall we name this?''

``I call it the \textit{answer sheet},'' the Knight tells the King. The Knight then frowned. 

``What is it?'' asked the King. 

The Knight replied, ``As always, small islands need special attention. On Islands 1 and 2, the enemy is unable to move anywhere, and it is automatically trapped. Also, on Island 3, if the enemy is on the border, we need 2 knights to trap them, and if the enemy is in the center, it is automatically trapped.''

``Well then, I believe we are ready. Thank you, my Knight,'' the King says. The Knight sets off to Island 7 with his fellow knight soldiers on the royal hot air balloon to trap the enemy knight.

\subsection{Bishops}

The King's most trustworthy pawn came in, bursting through the doors. 

``My King, the citizens in many different islands have seen enemy soldiers, and it has been identified that they are invading. The latest report claims that there is an enemy Bishop snooping around Island 7,'' the pawn whimpered.

The King summoned the Bishop and told him the grave news. The Bishop nodded and ran off to start planning right away. The Bishop was carefully making his answer sheet when he realized that there was a huge problem; the Bishop and his soldiers would never be able to trap the enemy Bishop if it was on one of the counties on the main diagonal or the counties that were one county away from the main diagonal, with four exceptions.

``My King, I'm afraid we will not be able to trap the enemy bishop in many counties. The problematic counties are the shaded ones shown in Fig \ref{fig:Problematiccounties},'' the Bishop said sorrowfully.

\begin{figure}[ht!]
\begin{center}
\begin{tikzpicture}
\draw[step=0.5cm,color=black,line width=1.5] (0,0) grid (3.5,3.5);
 \fill [black] (0,0) rectangle (0.5,0.5);
\fill [black] (0.5,0.5) rectangle (1,1);
\fill [black] (1,0.5) rectangle (1.5,1);
\fill [black] (0.5,1) rectangle (1,1.5);
\fill [black] (1,1) rectangle (1.5,1.5);
\fill [black] (1.5,1) rectangle (2,1.5);
\fill [black] (1,1.5) rectangle (1.5,2);
\fill [black] (2,0.5) rectangle (2.5,1);
\fill [black] (2.5,0.5) rectangle (3,1);
\fill [black] (3,0) rectangle (3.5,0.5);
\fill [black] (2,1) rectangle (2.5,1.5);
\fill [black] (2.5,1) rectangle (3,1.5);
\fill [black] (1.5,1.5) rectangle (2,2);
\fill [black] (2,1.5) rectangle (2.5,2);
\fill [black] (0.5,2) rectangle (1,2.5);
\fill [black] (0.5,2.5) rectangle (1,3);
\fill [black] (0,3) rectangle (0.5,3.5);
\fill [black] (1,2) rectangle (1.5,2.5);
\fill [black] (1,2.5) rectangle (1.5,3);
\fill [black] (1.5,2) rectangle (2,2.5);
\fill [black] (2,2) rectangle (2.5,2.5);
\fill [black] (2,2.5) rectangle (2.5,3);
\fill [black] (2.5,2) rectangle (3,2.5);
\fill [black] (2.5,2.5) rectangle (3,3);
\fill [black] (3,3) rectangle (3.5,3.5);
\end{tikzpicture}
\end{center}
\caption{Problematic counties of Island 7}
\label{fig:Problematiccounties}
\end{figure}
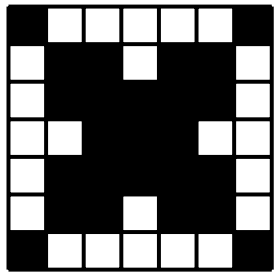

He explained, ``If the enemy is on any of the main diagonals, we cannot see the corners of that diagonal, for the only way we to see them is to be on the main diagonal, which means the enemy can also see us. Additionally, if the enemy is one county away from the main diagonals, but not on the edge of the island, then the 2 bishops that see the ends of the enemy's diagonal adjacent to the main diagonal must see each other. But if the enemy is adjacent to a corner county, then we don't have to see the end of that diagonal since the enemy is on it. So if the enemy is on a main diagonal or one county away from it, with the exception of the counties adjacent to the corners, we cannot trap them.''

``We don't have much time, and you already started your research, so you are our best hope. And, if you can, we don't know if there will be more enemy bishops invading, so please create an answer sheet for other islands as well,'' the King told him. The Bishop hurried back to complete the answer sheets.

After a few hours, the Bishop came back with his plans for Island 7. ``My soldiers and I can trap the enemy Bishop as you can see in Figure~\ref{fig:bishoptrappingplans},'' the Bishop told the King.

\begin{figure}[ht!]
\begin{center}
\begin{tikzpicture}
\draw[step=0.5cm,color=black,line width=1.5] (0,0) grid (3.5,3.5);
\node at (0.75,0.25) {\bishop};
\node at (0.75,3.25) {\bishopB};
\node at (1.75,0.25) {\bishop};
\node at (1.75,3.25) {\bishop};
\node at (2.75,0.25) {\bishop};
\node at (2.75,3.25) {\bishop};
\end{tikzpicture}
\quad
\begin{tikzpicture}
\draw[step=0.5cm,color=black,line width=1.5] (0,0) grid (3.5,3.5);
\node at (0.75,1.75) {\bishop};
\node at (1.25,0.25) {\bishop};
\node at (1.25,3.25) {\bishopB};
\node at (2.75,0.75) {\bishop};
\node at (2.75,2.75) {\bishop};
\end{tikzpicture}
\quad
\begin{tikzpicture}
\draw[step=0.5cm,color=black,line width=1.5] (0,0) grid (3.5,3.5);
\node at (1.75,0.25) {\bishop};
\node at (1.75,1.25) {\bishop};
\node at (1.75,2.25) {\bishop};
\node at (1.75,3.25) {\bishopB};
\end{tikzpicture}
\quad
\begin{tikzpicture}
\draw[step=0.5cm,color=black,line width=1.5] (0,0) grid (3.5,3.5);
\node at (0.75,2.75) {\bishop};
\node at (1.25,0.25) {\bishop};
\node at (1.75,2.75) {\bishopB};
\node at (2.25,0.25) {\bishop};
\node at (2.75,2.75) {\bishop};
\end{tikzpicture}
\end{center}
\caption{Bishop trapping plans}
\label{fig:bishoptrappingplans}
\end{figure}
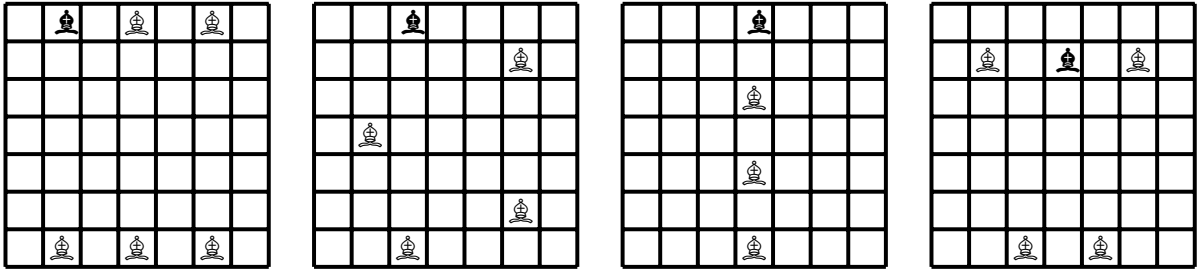

``And for the other cases not listed, I'm assuming you are unable to trap him?''

``Yes, my King. This means that my answer sheet is Figure~\ref{fig:onefourthbishopanswersheet(island7)}.''

\begin{figure}[ht!]
\begin{center}
\begin{tikzpicture}
\matrix (m) [matrix of nodes,
             nodes={draw, minimum size=1em, anchor=center,
                    outer sep=0pt},
             column sep=-\pgflinewidth, row sep=-\pgflinewidth
             ]
{
 0 & 5 & 4 & 3 & 4 & 5 & 0 \\
   & 0 & 0 & 4 & 0 & 0 &   \\
   &   & 0 & 0 & 0 &   &   \\
   &   &   & 0 &   &   &   \\
 };
\end{tikzpicture}
\end{center}
    \caption{One-fourth of Bishop answer sheet for Island 7}
    \label{fig:onefourthbishopanswersheet(island7)}
\end{figure}

``Huh? Why does the answer sheet look like that?'' the King questioned.

``Well, I thought it would be easier for my soldiers to carry. Because of the symmetries, the numbers are repeating anyway. Figure~\ref{fig:onefourthbishopanswersheet(island7)} is the top one-fourth of the entire answer sheet,'' the Bishop explained.

``I see. Did you also make the answer sheet for other islands?''

``Yes, my King. Here is the answer sheet for Island 8 in Figuree~\ref{fig:onefourthbishopanswersheet(island8)}.

\begin{figure}[ht!]
\begin{center}
\begin{tikzpicture}
\matrix (m) [matrix of nodes,
             nodes={draw, minimum size=1em, anchor=center,
                    outer sep=0pt},
             column sep=-\pgflinewidth, row sep=-\pgflinewidth
             ]
{
 0 & 6 & 5 & 4 & 4 & 5 & 6 & 0 \\
   & 0 & 0 & 5 & 5 & 0 & 0 &   \\
   &   & 0 & 0 & 0 & 0 &  &   \\
   &   &   & 0 & 0 &  &   &   \\
 };
\end{tikzpicture}
\end{center}
    \caption{One-fourth of Bishop answer sheet for Island 8}
    \label{fig:onefourthbishopanswersheet(island8)}
\end{figure}

``This looks slightly different,'' the King commented.

``Even and odd-sized islands are always different for bishops. As you can see from these examples, in the middle, there are two counties marked as 0 in the answer sheet for even islands while there is only one for the other,'' the Bishop said.

``Why does that happen?'' 

``It is because on even islands, the middle county does not exist. This means that the two main diagonals of the even islands do not intersect, while in odd islands, they intersect in the middle county. So, in even islands, there will be a 4 by 4 square in the middle where we can never trap the enemy Bishop, and in odd islands, there will be a 3 by 3 square.''

``What about other islands?'' the King asked.

``From these two figures, we can see a pattern that is easy to explain: we need the number of bishops equivalent to the length of the longest diagonal that the enemy can see, minus 1. I can prove that we cannot do better.'' the Bishop says. ``Looking at the longest diagonal that the enemy can see, for every county on that diagonal, we need 1 bishop to see it, as a bishop can see at most 1 county on a diagonal line of counties.''

``I understand that you can't do better, but are you sure you can achieve these numbers?'' the King asked.

The Bishop replied, ``We can position the bishops so that they see all the counties on the main diagonal. We can also position some of them so that they see a county on the shorter diagonal as well. I checked that it is easy to do.''

The King approved the Bishop's plan and sent him and his soldiers to Island 7. There, they found the enemy Bishop lurking in county (7,6). The Bishop called five of his soldiers and commanded them to order themselves as the first diagram in Figure~\ref{fig:bishoptrappingplans}, but rotate 90 degrees clockwise. They were successful in trapping the enemy Bishop, and Island 7 became peaceful again.

\subsection{Rooks}

Amidst all the chaos, the Rook's self-confidence was as high as it had ever been.

``Your plans are so complicated that you need to make whole \textit{answer sheets} for them. How tedious! I need just $n-1$ of my soldiers to trap the enemy rook, independently from where they hide. I can place my soldiers in different rows and columns, as long as they are not the ones that the enemy Rook can see. See my plan for Island 6 in Figure~\ref{fig:rooktrapping}. I can even prove that this is the best I can do!'' he gloats.

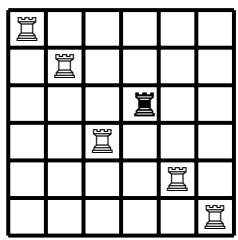
\begin{figure}[ht!]
    \begin{center}
        \begin{tikzpicture}[scale=1]
             \node at (0.25,2.75){\rook};
             \node at (0.75,2.25){\rook};
             \node at (1.25,1.25){\rook};
             \node at (1.75,1.75){\rookB};
             \node at (2.25,0.75){\rook};
             \node at (2.75,0.25){\rook};
            \draw[step=0.5cm,color=black, line width=1.5] (0,0) grid (3,3);
        \end{tikzpicture}
    \end{center}
    \caption{A possible rook formation on Island 6}
    \label{fig:rooktrapping}
\end{figure}

``Go on,'' the King said dully, tired of the Rook's constant boasting.

``Two rooks can survey at most 2 counties together without attacking each other. A rook sees $2n-2$ counties, so to see all the counties that the enemy Rook sees, the number of rooks needed for trapping can't be less than $\frac{2n-2}{2}=n-1$.''

``Why are you sure you can always trap?'' asked the King.

``It is trivial,'' bragged the Rook. ``Imagine that I removed the row and column where the enemy Rook is hiding. Then I can place my rooks on the main diagonal of the leftover square.''

Unfortunately for the Rook, his boasting only made the King assign his troops to trap enemy pieces on many different islands, for his strategy was one of the simpler ones. It would be a while until anyone saw the rooks in the castle again.

\subsection{Kings}

The peace on Island 7 did not last long, as the enemy King started to invade Island 7 again. The white King decides to step in with his brothers. Working on his answer sheet, he initially thought that the sheet would be very similar to the one his Knight made. He felt that he and his Knight could never make more than 8 different moves. However, the King soon realized that his answer sheet was similar to the Bishop's sheet too, as there were counties where the enemy King could not be trapped.

``My brothers and I will never be able to trap the enemy King if he is one county away from a border, nor when he is on the border next to a corner!'' The King shows Figure~\ref{fig:kingonecellawayfromborder} to his escort.

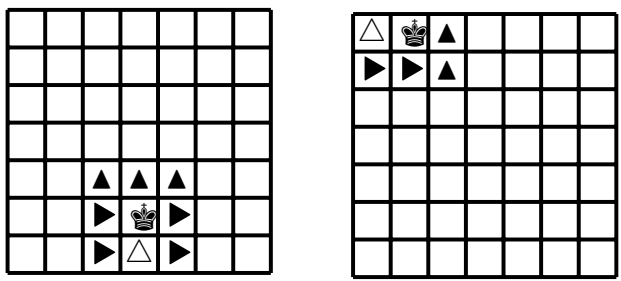
\begin{figure}[ht!]
\begin{center}
\begin{tikzpicture}
\draw[step=0.5cm,color=black,line width=1.5] (0,0) grid (3.5,3.5);
\node at (1.25,0.25) {\begin{Huge}$\blacktriangleright$\end{Huge}};
\node at (1.25,0.75) {\begin{Huge}$\blacktriangleright$\end{Huge}};
\node at (1.25,1.25) {$\blacktriangle$};
\node at (1.75,0.25) {$\triangle$};
\node at (1.75,0.75) {\kingB};
\node at (1.75,1.25) {$\blacktriangle$};
\node at (2.25,0.25) {\begin{Huge}$\blacktriangleright$\end{Huge}};
\node at (2.25,0.75) {\begin{Huge}$\blacktriangleright$\end{Huge}};
\node at (2.25,1.25) {$\blacktriangle$};
\end{tikzpicture}
\quad
\quad
\begin{tikzpicture}
\draw[step=0.5cm,color=black,line width=1.5] (0,0) grid (3.5,3.5);
\node at (0.25,2.75) {\begin{Huge}$\blacktriangleright$\end{Huge}};
\node at (0.25,3.25) {$\triangle$};
\node at (0.75,2.75) {\begin{Huge}$\blacktriangleright$\end{Huge}};
\node at (0.75,3.25) {\kingB};
\node at (1.25,2.75) {$\blacktriangle$};
\node at (1.25,3.25) {$\blacktriangle$};
\end{tikzpicture}
\end{center}
\caption{Enemy king can't be trapped}
\label{fig:kingonecellawayfromborder}
\end{figure}

``The triangles represent where the enemy King can move to, and the black triangles, I can block. It's the white triangle county that I have a problem with. To block the enemy King from moving there, I need to be on one of the tilted black triangles, but it would mean that we are attacking each other,'' the King clarifies. He added, ``But other than these two cases, I believe I will be able to corner the enemy King.''

``Indeed. Now, what other cases remain?'' asked a pawn.

``I suppose we need to consider three more cases; one when the enemy King is on the corner counties, one when he is on the border counties (excluding the corners and the ones adjacent to them), and when he is at least two counties away from every border. You can see my plans in Figure~\ref{fig:kingtrapping}.''

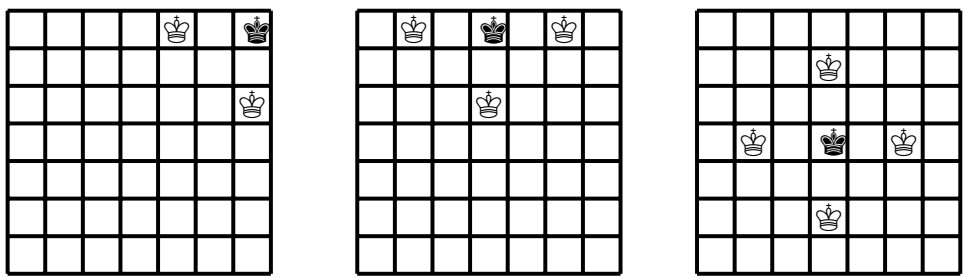
\begin{figure}[ht!]
\centering
\begin{tikzpicture}
\draw[step=0.5cm,color=black,line width=1.5] (0,0) grid (3.5,3.5);
\node at (2.25,3.25) {\king};
\node at (3.25,2.25) {\king};
\node at (3.25,3.25) {\kingB};
\end{tikzpicture}
\quad
\quad
\begin{tikzpicture}
\draw[step=0.5cm,color=black,line width=1.5] (0,0) grid (3.5,3.5);
\node at (0.75,3.25) {\king};
\node at (1.75,2.25) {\king};
\node at (1.75,3.25) {\kingB};
\node at (2.75,3.25) {\king};
\end{tikzpicture}
\quad
\quad
\begin{tikzpicture}
\draw[step=0.5cm,color=black,line width=1.5] (0,0) grid (3.5,3.5);
\node at (0.75,1.75) {\king};
\node at (1.75,0.75) {\king};
\node at (1.75,1.75) {\kingB};
\node at (1.75,2.75) {\king};
\node at (2.75,1.75) {\king};
\end{tikzpicture}
\caption{King trapping cases}
\label{fig:kingtrapping}
\end{figure}

``This is great!'' the King says joyfully. ``My analysis is easy to extrapolate for any island. So, my answer sheet would be Figure~\ref{fig:kinganswersheet}.''

\begin{figure}[ht!]
\begin{center}
\begin{tikzpicture}
\draw[step=0.5cm,color=black,line width=1.5] (0,0) grid (4.5,4.5);
\node at (0.25,0.25) {2};
\node at (0.25,0.75) {0};
\node at (0.25,1.25) {3};
\node at (0.25,1.75) {3};
\node at (0.25,2.25) {:};
\node at (0.25,2.75) {3};
\node at (0.25,3.25) {3};
\node at (0.25,3.75) {0};
\node at (0.25,4.25) {2};
\node at (0.75,0.25) {0};
\node at (0.75,0.75) {0};
\node at (0.75,1.25) {0};
\node at (0.75,1.75) {0};
\node at (0.75,2.25) {:};
\node at (0.75,2.75) {0};
\node at (0.75,3.25) {0};
\node at (0.75,3.75) {0};
\node at (0.75,4.25) {0};
\node at (1.25,0.25) {3};
\node at (1.25,0.75) {0};
\node at (1.25,1.25) {4};
\node at (1.25,1.75) {4};
\node at (1.25,2.25) {:};
\node at (1.25,2.75) {4};
\node at (1.25,3.25) {4};
\node at (1.25,3.75) {0};
\node at (1.25,4.25) {3};
\node at (1.75,0.25) {3};
\node at (1.75,0.75) {0};
\node at (1.75,1.25) {4};
\node at (1.75,1.75) {4};
\node at (1.75,2.25) {:};
\node at (1.75,2.75) {4};
\node at (1.75,3.25) {4};
\node at (1.75,3.75) {0};
\node at (1.75,4.25) {3};
\node at (2.25,0.25) {...};
\node at (2.25,0.75) {...};
\node at (2.25,1.25) {...};
\node at (2.25,1.75) {...};
\node at (2.25,2.25) {...};
\node at (2.25,2.75) {...};
\node at (2.25,3.25) {...};
\node at (2.25,3.75) {...};
\node at (2.25,4.25) {...};
\node at (2.75,0.25) {3};
\node at (2.75,0.75) {0};
\node at (2.75,1.25) {4};
\node at (2.75,1.75) {4};
\node at (2.75,2.25) {:};
\node at (2.75,2.75) {4};
\node at (2.75,3.25) {4};
\node at (2.75,3.75) {0};
\node at (2.75,4.25) {3};
\node at (3.25,0.25) {3};
\node at (3.25,0.75) {0};
\node at (3.25,1.25) {4};
\node at (3.25,1.75) {4};
\node at (3.25,2.25) {:};
\node at (3.25,2.75) {4};
\node at (3.25,3.25) {4};
\node at (3.25,3.75) {0};
\node at (3.25,4.25) {3};
\node at (3.75,0.25) {0};
\node at (3.75,0.75) {0};
\node at (3.75,1.25) {0};
\node at (3.75,1.75) {0};
\node at (3.75,2.25) {:};
\node at (3.75,2.75) {0};
\node at (3.75,3.25) {0};
\node at (3.75,3.75) {0};
\node at (3.75,4.25) {0};
\node at (4.25,0.25) {2};
\node at (4.25,0.75) {0};
\node at (4.25,1.25) {3};
\node at (4.25,1.75) {3};
\node at (4.25,2.25) {:};
\node at (4.25,2.75) {3};
\node at (4.25,3.25) {3};
\node at (4.25,3.75) {0};
\node at (4.25,4.25) {2};
\end{tikzpicture}
\end{center}
\caption{King trapping answer sheet}
\label{fig:kinganswersheet}
\end{figure}

``I just realized that if the enemy King attacks $n$ cells and it is possible to trap it, it will take exactly $\frac{n+1}{2}$ kings to trap it.''

A pawn commented, ``The number of kings needed for trapping is exactly the same as the number of non-diagonal cells the enemy King can attack.''

``You are right, but I am in a hurry to save my country,'' retorted the King. ``I will leave with haste alongside my brothers.'' When they returned, the King received an urgent message from Island 5, stating that an enemy Queen was invading this time.

\subsection{Queens}

``What, no! I can't just go trap an enemy Queen! I have so many other things to do! Yesterday, I spent six hours just choosing what I'm going to wear today! \textit{Six hours!} Do you not see how busy I am?'' the Queen exclaimed in shock.

``You don't have to go; you only have to help us plan. Your sisters will trap the enemy Queen!'' the King clarified.

``I'll think about it,'' the Queen huffed. 

``But the enemy Queen is invading right now!'' 

``Ugh, fine! It's only for this time,'' the Queen said, exasperated. 

``Thank you, dear, you can work on the answer sheet for smaller islands, maybe up to Island 5, and we will work on larger ones.''

``Why do I have to find the answer sheet for all the small ones? The enemy Queen's only invading Island 5.''

``Ah, we thought it would be useful to have those answer sheets because we don't know if they will be invading again. Now, let's get to work!'' the King shouted.

The Queen started to work on Island 2, but she realized her sisters would not be able to even set foot there as the enemy queen attacked every other county. So, she began to work on Island 3. 

``Huh, it seems like my sisters will be unable to trap the enemy queen,'' the Queen muttered to herself. The queen quickly realized that it was impossible to trap the enemy queen on Island 3. The three cases where the enemy queen is in the corner, side, or middle are shown in Figure~\ref{fig:queentrapping3} together with the answer sheet. In the first two cases the enemy queen can always escape to a gray corner. In the third case the enemy queen attacks the whole island.

\begin{figure}[ht!]
\begin{center}
    \begin{tikzpicture}
    \fill [lightgray] (0.5,1) rectangle (1,1.5);
    \draw[step=0.5 cm,color=black, line width=1.5] (0,0) grid (1.5,1.5);
	\node at (1.25,0.75) {\queenB};	
	\node at (0.25,0.25) {\queen};
    \end{tikzpicture}
\quad
\quad
    \begin{tikzpicture}
    \fill [lightgray] (0.5,1) rectangle (0,1.5);
    \draw[step=0.5 cm,color=black, line width=1.5] (0,0) grid (1.5,1.5);
	\node at (1.25,0.75) {\queen};	
	\node at (0.25,0.25) {\queenB};
    \end{tikzpicture}
\quad
\quad
    \begin{tikzpicture}
    \draw[step=0.5 cm,color=black, line width=1.5] (0,0) grid (1.5,1.5);
	\node at (0.75,0.75) {\queenB};	
    \end{tikzpicture}
\quad
\quad
    \begin{tikzpicture}
    \draw[step=0.5 cm,color=black, line width=1.5] (0,0) grid (1.5,1.5);
    \node at (0.25,1.25) {0};
    \node at (0.75,1.25) {0};
    \node at (1.25,1.25) {0};
    
    \node at (0.25,0.75) {0};
    \node at (0.75,0.75) {0};
    \node at (1.25,0.75) {0};
    
    \node at (0.25,0.25) {0};
    \node at (0.75,0.25) {0};
    \node at (1.25,0.25) {0};
    \end{tikzpicture}
\end{center}
    \caption{Queen trapping for Island 3}
    \label{fig:queentrapping3}
\end{figure}
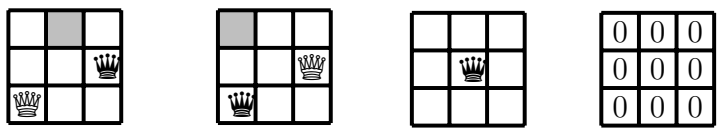

The Queen then begins planning for Island 4, and runs into a hurdle.

``What?! It is impossible to trap the enemy Queen if she's in one of the middle four counties!'' She draws her trapping solutions and the answer sheet for Island 4 in Figure~\ref{fig:queentrapping4}.

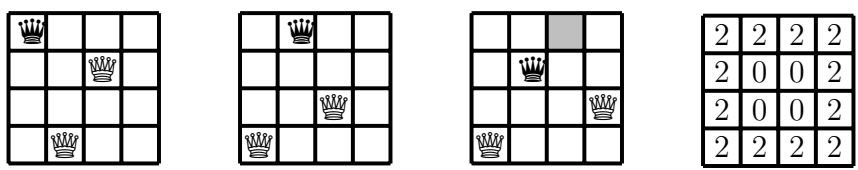
\begin{figure}[ht!]
\begin{center}
\begin{tikzpicture}
\draw[step=0.5cm,color=black, line width=1.5] (0,0) grid (2,2);
\node at (0.25,1.75) {\queenB};
\node at (1.25,1.25) {\queen};
\node at (0.75,0.25) {\queen};
\end{tikzpicture}
\quad
\quad
\begin{tikzpicture}
\draw[step=0.5cm,color=black, line width=1.5] (0,0) grid (2,2);
\node at (0.75,1.75) {\queenB};
\node at (0.25,0.25) {\queen};
\node at (1.25,0.75) {\queen};
\end{tikzpicture}
\quad
\quad
\begin{tikzpicture}
\fill [lightgray] (1.5,2) rectangle (1,1.5);
\draw[step=0.5cm,color=black, line width=1.5] (0,0) grid (2,2);
\node at (0.75,1.25) {\queenB};
\node at (0.25,0.25) {\queen};
\node at (1.75,0.75) {\queen};
\end{tikzpicture}
\quad
\quad
\begin{tikzpicture}
\draw[step=0.5cm,color=black, line width=1.5] (0,0) grid (2,2);
\node at (0.25,1.75) {2};
\node at (0.75,1.75) {2};
\node at (1.25,1.75) {2};
\node at (1.75,1.75) {2};

\node at (0.25,1.25) {2};
\node at (0.75,1.25) {0};
\node at (1.25,1.25) {0};
\node at (1.75,1.25) {2};

\node at (0.25,0.75) {2};
\node at (0.75,0.75) {0};
\node at (1.25,0.75) {0};
\node at (1.75,0.75) {2};

\node at (0.25,0.25) {2};
\node at (0.75,0.25) {2};
\node at (1.25,0.25) {2};
\node at (1.75,0.25) {2};
\end{tikzpicture}
\end{center}
    \caption{Queen trapping for Island 4}
    \label{fig:queentrapping4}
\end{figure}

``I wonder if this will be true for other islands as well,'' the Queen pondered. After some time thinking, she found out that this is only a problem for smaller islands because the number of cells the enemy Queen can attack from one certain county is greater than the ones that are not, making trapping her inflexible.

``Well, I'm tired, so it's time to choose my outfit for tomorrow!'' The Queen hurried to her closet, only to find the King waiting there.

``Aha! I knew you'd come here! Did you finish your work on small islands?''

``Um, I finished Islands 2, 3, and 4,'' the Queen stammered.

``Could you at least finish Island 5 before you choose your clothes? Island 5 is in peril, as you well know.''

``Alright,'' the Queen sighed as she walked back.

She resumed her work, making the answer sheet for Island 5. To her surprise, her sisters would always be able to trap the enemy Queen! Her answer sheet is shown in Figure~\ref{fig:queentrapping5}.

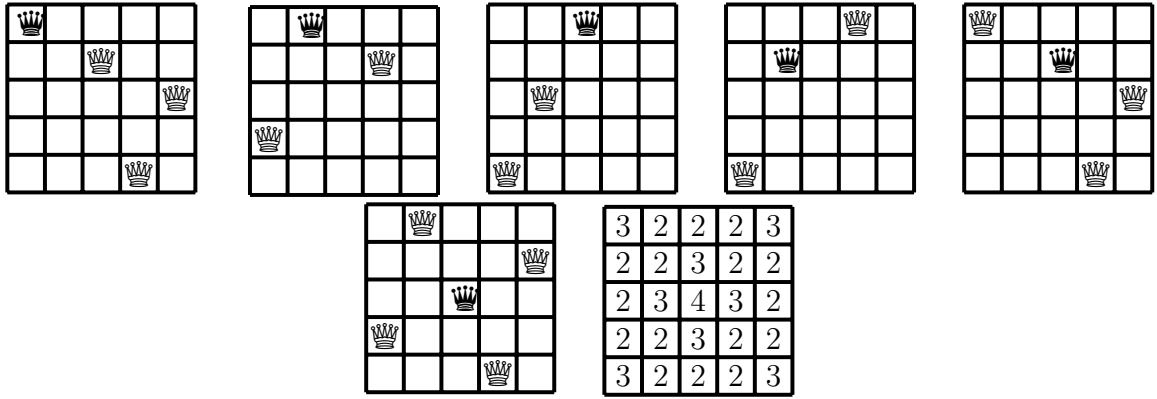
\begin{figure}[ht!]
\begin{center}
\begin{tikzpicture}
\draw[step=0.5cm,color=black, line width=1.5] (0,0) grid (2.5,2.5);
\node at (0.25,2.25) {\queenB};
\node at (1.25,1.75) {\queen};
\node at (2.25,1.25) {\queen};
\node at (1.75,0.25) {\queen};
\end{tikzpicture}
\quad
\begin{tikzpicture}
\draw[step=0.5cm,color=black, line width=1.5] (0,0) grid (2.5,2.5);
\node at (0.75,2.25) {\queenB};
\node at (1.75,1.75) {\queen};
\node at (0.25,0.75) {\queen};
\end{tikzpicture}
\quad
\begin{tikzpicture}
\draw[step=0.5cm,color=black, line width=1.5] (0,0) grid (2.5,2.5);
\node at (1.25,2.25) {\queenB};
\node at (0.75,1.25) {\queen};
\node at (0.25,0.25) {\queen};
\end{tikzpicture}
\quad
\begin{tikzpicture}
\draw[step=0.5cm,color=black, line width=1.5] (0,0) grid (2.5,2.5);
\node at (1.75,2.25) {\queen};
\node at (0.75,1.75) {\queenB};
\node at (0.25,0.25) {\queen};
\end{tikzpicture}
\quad
\begin{tikzpicture}
\draw[step=0.5cm,color=black, line width=1.5] (0,0) grid (2.5,2.5);
\node at (0.25,2.25) {\queen};
\node at (1.25,1.75) {\queenB};
\node at (1.75,0.25) {\queen};
\node at (2.25,1.25) {\queen};
\end{tikzpicture}
\quad
\begin{tikzpicture}
\draw[step=0.5cm,color=black, line width=1.5] (0,0) grid (2.5,2.5);
\node at (0.75,2.25) {\queen};
\node at (2.25,1.75) {\queen};
\node at (1.25,1.25) {\queenB};
\node at (0.25,0.75) {\queen};
\node at (1.75,0.25) {\queen};
\end{tikzpicture}
\quad
\begin{tikzpicture}
\draw[step=0.5cm,color=black, line width=1.5] (0,0) grid (2.5,2.5);
\node at (0.25,2.25) {$3$};
\node at (0.75,2.25) {$2$};
\node at (1.25,2.25) {$2$};
\node at (1.75,2.25) {$2$};
\node at (2.25,2.25) {$3$};

\node at (0.25,1.75) {$2$};
\node at (0.75,1.75) {$2$};
\node at (1.25,1.75) {$3$};
\node at (1.75,1.75) {$2$};
\node at (2.25,1.75) {$2$};

\node at (0.25,1.25) {$2$};
\node at (0.75,1.25) {$3$};
\node at (1.25,1.25) {$4$};
\node at (1.75,1.25) {$3$};
\node at (2.25,1.25) {$2$};

\node at (0.25,0.75) {$2$};
\node at (0.75,0.75) {$2$};
\node at (1.25,0.75) {$3$};
\node at (1.75,0.75) {$2$};
\node at (2.25,0.75) {$2$};

\node at (0.25,0.25) {$3$};
\node at (0.75,0.25) {$2$};
\node at (1.25,0.25) {$2$};
\node at (1.75,0.25) {$2$};
\node at (2.25,0.25) {$3$};
\end{tikzpicture}
\end{center}
    \caption{Queen trapping for Island 5}
    \label{fig:queentrapping5}
\end{figure}

Having completed the answer sheets for Island 5, she decided to meet up with the King to let him know she was finished. ``Then, \textit{finally}, I'll choose my outfit for tomorrow!'' she told herself. ``I wonder if the King is anywhere close to finishing his answer sheets?''

Instead, she found the King relaxing and drinking beer on his recliner, not working at all. ``I'm done with my answer sheets,'' she said. ``Have you finished yours?'' she asked dubiously. 

To her surprise, the King replied, ``We didn't. Instead, we estimated how many queen sisters you might need on larger islands. For example, if the enemy queen attacks $x$ counties, you always need at least $\frac{x}{12}$ queens for Island $n$.''

``Why is that?'' the Queen asked. 

The King replied, ``We can prove that two queens can attack no more than 12 counties together. Each queen attacks in four directions. For every pair of these directions that are not parallel to each other, there is at least one intersection between them. The four directions from the enemy queen are each intersected by at most three directions from the other queen. So there are at most 12 counties two queens can attack together. And since each sister queen can see a maximum of 12 counties that the enemy queen sees, we need at least $\frac{x}{12}$ queens for Island $n$.''

The King had doubts, ``12 counties are a lot. Other chess pieces can attack much fewer counties together. Is this number achievable?'' 

``Yes, we are powerful!'' boasted the Queen and showed an example in Figure~\ref{fig:queenintersecting8} of 12 counties under attack. 

\begin{figure}[!ht]
    \begin{center}
        \begin{tikzpicture}[scale=1]
            \fill [gray] (0,2) rectangle (0.5,2.5);
            \fill [gray] (0.5,0.5) rectangle (1,1);
            \fill [gray] (0.5,2) rectangle (1,2.5);
            \fill [gray] (0.5,3.5) rectangle (1,4);
            \fill [gray] (1,1) rectangle (1.5,1.5);
            \fill [gray] (1,2) rectangle (1.5,2.5);
            \fill [gray] (1.5,1.5) rectangle (2,2);
            \fill [gray] (1.5,2.5) rectangle (2,3);
            \fill [gray] (2,0) rectangle (2.5,0.5);
            \fill [gray] (2,1.5) rectangle (2.5,2);
            \fill [gray] (2,3) rectangle (2.5,3.5);
            \fill [gray] (2.5,1.5) rectangle (3,2);\node at (0.75, 1.75) {\queenB};
            \node at (2.25, 2.25) {\queen};
            \draw[step=0.5cm,color=black, line width=1.5] (0,0) grid (4,4);
        \end{tikzpicture}
    \end{center}
    \caption{Maximum mutual attacks by two queens}
    \label{fig:queenintersecting8}
\end{figure}
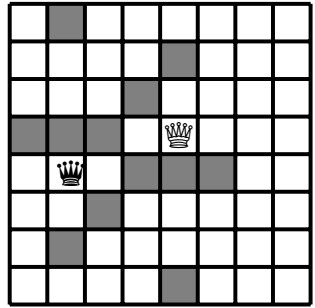

``I see,'' said the King, ``But how big is $x$?

The queen replied, ``If the enemy queen is in the corner, her attacking power is the worst: she attacks $3n-3$ counties. When the enemy queen is in the center, her attacking power is the best: she attacks $4n-4$ counties when $n$ is odd. When $n$ is even, there is no center county, but the maximum attacking power for an enemy queen is in the four center counties. In this case she attacks $4n-5$ counties. So $x$ ranges between $3n-3$ and $4n-4$.''

``On the other hand, you can never need more than $n-1$ sister queens, as you can never even place more than $n$ queens without attacking each other,'' the King adds.

``But this doesn't prove that it is always possible to trap the enemy Queen on larger islands!'' the Queen exclaimed. ``On Islands 1 to 4, it's not even guaranteed that my sisters will be able to trap the enemy Queen!''

``I have a proof! It is stated in the well-known $n$-queens problem that we can arrange $n$ non-attacking queens on chessboards of sizes greater than 5. Each queen in such a configuration is trapped by the other queens. Similar to the non-attacking rook formation, the queens must be in different rows and columns. If a queen moves out of her spot in the configuration, she must be in the same row or column with at least one other queen, so the enemy queen cannot escape anywhere in this configuration.''

``Ha-ha! But do you know whether a solution exists where the enemy queen is located in a particular county?'' contradicted the Queen.

``I am sure it is possible,'' argued the King. For the regular 8-by-8 chessboard, there are 92 different ways to arrange non-attacking queens. Inevitably, one of these solutions has a queen at a county we need.''

A pawn interjected, ``This is true; I checked.''

``But I know that for a 6-by-6 board, there is only one solution up to symmetries, and these solution doesn't have any queens on diagonals,'' the Queen claimed.

``We checked that it is possible to trap the enemy Queen in every county on islands up to 7'' said the King.

``Let's hope that the enemy Queen won't attack other islands,'' the King says gravely. ``But we do know that we'll need not more than 4 of your sisters tomorrow at Island 5.''

``I am tired, and I do not want to think about this anymore,'' the Queen concluded and retired for the day.

\section{A Mathematical Conclusion}

\subsection{Surveying number}

In Section~\ref{sec:surveying} the students study the minimum number of days to survey a given island for a given chess piece. We can call this number \textit{the surveying number}.

We can explain the surveying number in terms of graph theory. Consider a graph $G = G(n,P) = (V, E)$  corresponding to an $n$-by-$n$ island and a chess piece $P$. Each county is a vertex. Two counties are connected if the chess piece $P$ can move from one county to the other.

Then the surveying number $\sigma(G)$ is the smallest number of vertices forming a path such that all other vertices have an edge connecting them to the path. The vertices on such path form a dominating set, where a \textit{dominating set} for a graph $G$ is a subset $D$ of vertices $V$ such that every vertex not in $D$ is adjacent to at least one member of $D$. Therefore, the surveying number can't be smaller than the domination number, where the \textit{domination number} $\gamma(G)$ is the number of vertices in the smallest dominating set for $G$: $\sigma(G) \geq \gamma(G)$. The domination sets on chess graphs were extensively studied. See, for example \cite{Fricke1995, Haynes1998}.

In mathematical terms, the surveying number is the length of the shortest path such that its vertices form a dominating set.

In the chess adventure story, the students started with the surveying number for the Knight. They proved it for small islands of sizes up to 7 and showed interesting surveying examples for larger islands. Table~\ref{tab:knightsurveying number} compares the fastest surveying number the students found with the knights graph's domination number from sequence A006075 in the OEIS \cite{OEIS}. The shoelace examples show that the asymptotic growth of the knight surveying number is bounded by $\frac{2n^2}{7}$.

\begin{table}[ht!]
\begin{center}
    \begin{tabular}{|r|c|c|c|c|c|c|c|c|c|c|c|c|c|c|c|}
\hline
    \textbf{IS}& 1  & 2 &3  &4  &5  &6&7 &8 &9 &10&11&12&13&14&15 \\
    \textbf{FT}& 1  &N/A&N/A&7  &7  &8&11&20&25&33&36&43&47&52&68 \\
    \textbf{DN}& 1  & 4 &4  &4  &5  &8&10&12&14&16&21&24&28&32&36\\
\hline
     \end{tabular}
\end{center}
\caption{Knight surveying and domination numbers}
\label{tab:knightsurveying number}
\end{table}

The rook surveying number is the same as the domination number. It is $n$ for Island $n$. This is not surprising as there exists a rook dominating set that forms a path on the rook graph.

The bishop domination number is $n$, \cite{Fricke1995}. For example, on odd boards, one can put bishops in the middle column. On even boards, on can put bishops on one of the two center columns. The surveying number must be larger as the bishops forming the dominating set cannot reach each other in one move. Our bishop surveying number is $n-2$ for each color.

The king domination number is $\lceil \frac{n}{3} \rceil^2$, see sequence A075561 in the OEIS \cite{OEIS} and \cite{Fricke1995}. Here is how to place the kings. Start by placing a king diagonally adjacent to the bottom-left square. Then, place a king at every location $3k$ units to the right and $3l$ units north of the king already placed such that $k,l\in \mathbb{N}$. If the island size is divisible by 3, then my students' first spiral method is the same as just connecting these placements by a path. Not surprisingly, for such islands, the formula is $\frac{n^2}{3} - 2$. The zig-zag spiral provides a better surveying number. If we look at small islands, the King and the Knight seem to compete with each other, as seen in Table~\ref{tab:summary-sightseeing}. However, asymptotically, the King's surveying number is $\frac{n^2}{4}$, which is better than the Knight's surveying number.

\begin{table}[ht!]
\centering
    \begin{tabular}{|c|c|c|c|c|c|c|c|c|c|c|c|c|c|c|c|}
    \hline
         \textbf{Islands} &1&2&3&4&5&6&7&8&9&10&11&12&13&14&15 \\
         \hline
\textbf{King} &1&1&1&4&7&10&14&18&23&29&34&40&48&55&62 \\
\textbf{Knight}& 1  &N/A&N/A&7  &7  &8&11&20&25&33&36&43&47&52&68 \\
   \hline
    \end{tabular}
\caption{The surveying number}
\label{tab:summary-sightseeing}
\end{table}

The students didn't study the queens much. This is because the queen's domination number was studied a lot. Or maybe, this is because the queen is the most challenging chess piece to study. The queen's domination number is still not known. It is known that the domination number is linear in $n$ and is bounded by \cite{Watkins2012}:
\[\frac {n-1}{2}\leq \gamma(n)\leq n-\left\lfloor \frac {n}{3}\right\rfloor.\]

Interestingly, another special queen domination number is more related to this story: the diagonal domination number, which was studied in \cite{CockayneHedetniemi1986}. The \textit{diagonal domination number} $d(G)$ is the smallest queen's dominating set, where all the queens are placed on the main diagonal. It is known that \cite{CockayneHedetniemi1986} for sufficiently large $n$, the diagonal domination number exceeds the domination number. As a diagonal is a good surveying path, we can say that 
\[\gamma(G) \leq \sigma(G) \leq d(G).\]

We can express a sequence of queens placement on the diagonal as a set of numbers $K$ in the range 1--$n$, where each number represent both coordinates. The theorem in \cite{CockayneHedetniemi1986} states that the set $K$ corresponds to the diagonal dominating set if and only if its complement is a midpoint-free even-sum set. Here \textit{midpoint-free} means that the set doesn't contain an average of any two of its elements. \textit{Even-sum} means that each sum of a pair of elements is even. As a consequence, the elements are either all even or all odd. Let us denote the diagonal domination number as $d_n$. As we just discussed, $d_n$ equals $n$ minus the size of the largest midpoint-free even-sum set in the range 1--$n$.

Sequence A003002 in the OEIS \cite{OEIS} counts the size of the largest subset $S_n$ of the numbers in the range 1--$n$ which does not contain a 3-term arithmetic progression. To calculate $d_n$, we need to look separately at even and odd numbers. If we have a midpoint-free even-sum subset of even numbers in the range 1--$n$, then we can subtract 1 from all the numbers and get a midpoint-free even-sum subset of odd numbers in the range 1--$n$. Thus, we can assume that the largest subset is odd.

Given a midpoint-free even-sum subset of odd numbers in the range 1--$n$, we can add 1 and divide by 2 to get a midpoint-free subset in the range 1--$\lfloor \frac{n+1}{2} \rfloor$. Thus, the sequence $E_n$ can be expressed through $S_n$:

The subset of odd numbers is not less than the subset of even numbers, and its size is $\lfloor \frac{n+1}{2} \rfloor$. If we denote the largest midpoint-free even-sum set in the range 1--$n$ as $E_n$, then
\[E_n = \textrm{A}003002\left( \left\lfloor \frac{n+1}{2} \right\rfloor \right).\]

For the values of $n$ between 1 and 10 inclusive, the result is:
\[1,\ 1,\ 2,\ 2,\ 2,\ 2,\ 3,\ 3,\ 4,\ 4,\ \ldots.\]

We need to keep in mind, that $d_1 = 1$. Otherwise the diagonal domination number for $n > 1$ is
\[d_n = n - \textrm{A}003002\left( \left\lfloor \frac{n+1}{2} \right\rfloor \right).\]
We submitted this sequence to the OEIS database \cite{OEIS} as sequence A358062. The first 10 values are:
\[0,\ 1,\ 1,\ 2,\ 3,\ 4,\ 4,\ 5,\ 5,\ 6,\ \ldots.\]

For Island 9, the queen domination number, as well as the queen diagonal domination number, is 5. Thus, the surveying number is also 5, and the proposed plan is the best. For Island 10, however, the queen domination number is 5, while the diagonal domination number is 6, see sequence A075458 (domination number for queen's graph) in the OEIS and references there.

\subsection{Trapping number}

In Section~\ref{sec:trapping}, we study the minimum number of non-attacking chess pieces that can be placed on the board so that one of the pieces --- the enemy piece --- can't move anywhere without being attacked. We look at the trapping number in terms of graph theory.

A natural first thought is to compare the trapping number to the non-attacking number $\alpha(G)$. Where the non-attacking number is the largest number of given chess pieces that can be placed on the board without mutually attacking each other. Suppose, for example, we have an arrangement of non-attacking queens. As the number of queens is the maximum, we can't add any more queens. That means that all the cells of the chessboard that are not occupied by queens are under attack. Suppose one of the queens in this arrangement is the enemy Queen. That means the enemy Queen is trapped. One might conclude that $\alpha(n) - 1$ is the upper bound for the trapping number. This conclusion has a flaw. We need to be able to trap an enemy Queen on any cell of the board. This means we need to be able to find a non-attacking arrangement with a queen on a given cell. If this is always possible, then the non-attacking number is an upper bound for the trapping numbers.

A natural second thought is to compare the trapping number to the domination number $\gamma(G)$. Suppose we have queens arranged in the dominating set on a chessboard. If we place the enemy Queen in one of the empty cells, the enemy Queen almost satisfies the condition of being trapped. Indeed, any cell where the enemy Queen can move to is under attack. The two caveats are the following: the enemy Queen is itself under attack, and the other queens might attack each other. Still, the intuition suggests that the trapping number might be smaller than the domination number, as, in domination, the queens need to attack all the cells on the board, while in trapping, the queens need to attack only a portion of the cells. So we expect that the trapping numbers will not exceed the domination number. 

By the way, one can see that the domination number can't exceed the non-attacking number: 
\[\gamma(G) \leq \alpha(G).\]

To define the trapping number, we need some more terms. A set of vertices is called \textit{independent} if no two vertices in the set are adjacent. We are given a set of elements $U = \{1,2,...,n\}$ called \textit{the universe}. A collection $S$ of $m$ sets whose union equals the universe is called a \textit{set cover}. \textit{The set cover problem} is to identify the smallest sub-collection of $S$ whose union equals the universe.

Given the island size and the type of chess pieces we use, we choose the corresponding graph $G$. Then we fix a vertex $v$ of the graph, which is equivalent to placing an enemy piece on the board. In our case, the set $U$ consists of the counties that the enemy chess piece can move to: the vertices adjacent to $v$ in graph $G$. The sets in the collection $S$ are defined as follows. Consider a county $C$ not occupied by an enemy piece. A set corresponding to this county $S_c$ is the set of the vertices in $U$ that are covered by $C$. Finding the smallest cover set for the collection $S$ is equivalent to trapping the enemy piece, disregarding the extra condition that the pieces shouldn't attack each other. Given that we study trapping where our chess pieces do not attack each other, we can say that we are solving the cover problem on the condition that the set in the cover has to consist of isolated vertices. In other words, we look for an independent set cover.

To summarize, the \textit{trapping number} is the size of the smallest set $T$ of vertices of $G$ not containing $v$ satisfying the following properties:
\begin{enumerate}
\item The set $T \cap v$ is independent.
\item All the vertices adjacent to $v$ are covered by the sets of neighbors of vertices in $T$.
\end{enumerate}

The students found a way to approach the lower bound for the trapping number. The idea is to look at the maximum number $m$ of vertices adjacent to two vertices in a graph $G$. This number is always finite. It is the largest for the queens' graph, where it can reach 12.

This approach explains why the trapping number behaves differently for pieces that move locally, like knights and kings, and for other pieces that can jump across the chessboard. For local pieces, the trapping number is bounded by a fixed constant. For non-local pieces, the trapping number is linear in the size of the board $n$.

The students completely analyzed local pieces. They showed that we never need more than 4 knights to trap a knight, and we never need more than 4 kings to trap a king.

They also proved the easiest case: the rooks. The trapping number is always $n-1$, which is one less than the domination number. One of the ways to trap a rook is to use Latin squares. Suppose there is an enemy rook on a particular cell of the chessboard of size $n$. Consider a Latin square of size $n$. The enemy cell corresponds to a particular number in a Latin square. All other entries of the same number in a Latin square can be used as trapping placements for white rooks.

The trapping number for bishops is more complicated. The problem is that for any board, if a bishop is placed on the diagonals and the cells adjacent to the diagonals, except the ones on the border, then such a bishop can't be trapped. Otherwise, the bishop's trapping number equals the length of the longest diagonal occupied by the bishop minus 1. When the enemy bishop can be trapped, the trapping number is between $\lfloor \frac{n}{2} \rfloor$ and $n-2$.

The trapping number for queens is even more complicated. The students proved the lower bound of $\lceil \frac{n-1}{4} \rceil$. If it is always possible to trap, then the upper bound has to be $n-1$. The students implicitly conjectured that on boards of sizes 5 and more, the enemy queen can always be trapped. The proof of this conjecture is left as an exercise for the reader.

\section{Acknowledgments}

We are grateful to the PRIMES STEP program and its director, Dr.~Slava Gerovitch, for giving us this opportunity to conduct research.

\end{document}